\title{\huge Steenrod operations on bar complex}
\date{}
\author{Syunji Moriya\footnote{Corresponding address: Department of Mathematics, Faculty of Science, 
Kyoto University, 
Kyoto, 606-8502, Japan.          \   
E-mail adress: \texttt{moriyasy@math.kyoto-u.ac.jp} \ 
Telephone number: 81-075-753-3700 \ 
FAX number: 81-075-753-3711}}
\theoremstyle{plain}
\newtheorem{defi}{Definition}[subsection]
\newtheorem{prop}[defi]{Proposition}
\newtheorem{rem}[defi]{Remark}
\newtheorem{lem}[defi]{Lemma}
\newtheorem{thm}[defi]{Theorem}
\newtheorem{exa}[defi]{Example}
\newcommand{\Z}{\mathbb{Z}}
\newcommand{\E}{\mathcal{E}}
\newcommand{\oper}{\mathcal{O}}
\newcommand{\A}{\mathcal{A}}
\newcommand{\ol}{\overline}
\newcommand{\mbf}{\mathbf}
\newcommand{\base}{\mathbf{k}}
\begin{document}
\maketitle
\begin{abstract}
We define a chain map of the form $\E(k)\otimes BA^{\otimes k}\longrightarrow BA$ , where $\E$ is a combinatorial $E_\infty$-operad called the sequence operad, and $BA$ is the bar complex of an $\E$-algebra $A$. We see that Steenrod-type operations derived from the chain map are equal to the corresponding operations on the cohomology of the based loop space under an isomorphism.
\end{abstract}
\section{Introduction}
 The bar complex is a model of cochain of a based loop space. J.R.Smith \cite{smith} and B.Fresse \cite{fresse1} enriched the bar complex of an $E_\infty$-algebra
with an $E_\infty$-structure. In view of M.A.Mandell's theorem \cite[Main Theorem]{mandell}, this enrichment provides a complete algebraic model of $p$-adic homotopy type of a based loop space and enables them to iterate the bar construction. As another complete model, the categorical bar complex is known but the (classical) bar complex has the advantage that one does not need to take a cofibrant replacement of an algebra to obtain the right answer with it.  Fresse defined   the $E_\infty$-structure, using systematically bar modules and model category structures on the categories of modules and algebras over an operad. \\ 
\indent Our motivation is to find a combinatorial alternative of these $E_\infty$-structures on the bar complex. In this article,  we prove  the following theorem.
\begin{thm}[Thm.\ref{thmchain}]\label{mainthm}
Let $k$ be a positive integer. Let $\E$ denote the sequence operad and $A$ be an $\E$-algebra. Let $BA$ be the bar complex of $A$. There exists a chain map
\[
\Phi_k:\E(k)\otimes_{\Sigma_k}BA^{\otimes k}\longrightarrow BA.
\]
Here, $\Sigma_k$ is the $k$-th symmetric group.
\end{thm}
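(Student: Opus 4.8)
The plan is to write down $\Phi_k$ by an explicit combinatorial formula and then to verify the two properties hidden in the statement: that the formula descends to the $\Sigma_k$-coinvariants, and that it commutes with the differentials. Throughout I would use that, since $\E$ is an $E_\infty$-operad containing the associative operad, $A$ carries a strictly associative but only homotopy-commutative product together with the higher structure maps $\E(j)\otimes A^{\otimes j}\to A$; and that on the bar complex $BA=\bigoplus_{n\ge 0}\ol A^{\otimes n}$ the total differential splits as $d=d_{\mathrm{int}}+d_{\mathrm{mult}}$, where $d_{\mathrm{int}}$ is the letterwise internal differential of $A$ and $d_{\mathrm{mult}}$ contracts adjacent letters by the binary product, the Koszul signs being fixed by the bar suspensions.

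To define $\Phi_k$, I would represent an element of $\E(k)$ by a sequence $u$ in the colors $\{1,\dots,k\}$ and a generator of $BA^{\otimes k}$ by bar words $w_i=[a_{i,1}\mid\cdots\mid a_{i,n_i}]$, the letters of $w_i$ being colored by $i$. The formula extends the iterated shuffle product: one reads $u$ to interleave the colored letters into a single bar word, and wherever $u$ prescribes that letters of several colors share a slot one multiplies those letters in $A$ by the operad element to which $u$ restricts on that slot; the value of $\Phi_k$ is then the signed sum of the resulting bar words over all admissible distributions of the letters of the $w_i$ along the occurrences dictated by $u$. The lowest piece, corresponding to the degree-zero product element of $\E(k)$, is exactly the $k$-fold shuffle, and the higher-dimensional sequences supply the $\smile_i$-type corrections.

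Granting this, $\Sigma_k$-equivariance is a direct sign check: permuting the colors of the $w_i$ and acting on $u\in\E(k)$ by the same permutation reproduce the same sum up to the Koszul sign, so $\Phi_k$ factors through $\E(k)\otimes_{\Sigma_k}BA^{\otimes k}$. The substantial step, and the one I expect to be the main obstacle, is the chain-map identity $d\circ\Phi_k=\Phi_k\circ d$. The $d_{\mathrm{int}}$-contributions match at once, so the content is that the remaining boundary terms cancel in pairs: collapsing two adjacent output letters of the same color is matched by the $d_{\mathrm{mult}}$-term of the corresponding factor $w_i$; collapsing two adjacent letters of different colors is matched by a face in the operad differential of $u$; and $d_{\mathrm{mult}}$ acting at a genuinely merged slot is matched, through the operad action, against the internal differential of the operadic product sitting there. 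Making these cancellations precise with correct signs is exactly where the combinatorial definition of the differential on the sequence operad and the associativity and equivariance axioms of the $\E$-algebra structure enter, and it is the delicate heart of the argument.

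For organizing the signs I would use that $BA$ is a cofree conilpotent coalgebra, which for the leading shuffle term lets me pin the formula down by its corestriction to the cogenerators $\ol A$; the higher correction terms are not themselves coalgebra maps, so for them I would instead prove $d\circ\Phi_k=\Phi_k\circ d$ by induction on the total number of letters $\sum_i n_i$. The base case is the arity-two, length-one relation in $\E$, which is precisely the chain homotopy witnessing homotopy-commutativity of the product of $A$; the inductive step then reduces the general cancellation to a single structural identity in $\E$ relating a face of a sequence to an operadic composite, which I would verify directly from the combinatorics defining $\E$.
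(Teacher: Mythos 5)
The proposal has a genuine gap at its foundation: the formula you propose for $\Phi_k$ is not well-defined, and the correct formula cannot simply be written down --- it has to be \emph{constructed}, by a device your plan never supplies. You place, at a slot where several colors merge, ``the operad element to which $u$ restricts on that slot.'' This fails for two concrete reasons. First, arity: a slot must in general absorb several letters of the \emph{same} color --- in the paper $(12)([x],[y_1|y_2])$ contains the term $[(12131)(x,y_1,y_2)]$, whose coefficient lies in $\E(3)$, while every restriction of $(12)$ lies in $\E(1)$ or $\E(2)$. Second, degree: if a slot carries the restricted sequence $f_{S_j}$ (with $|S_j|$ entries and $|f(S_j)|$ values) and absorbs $e(E_j)$ letters, a degree count on $\Phi_k$ forces the coefficient to have homological degree $e(E_j)+|S_j|-|f(S_j)|-1$, which exceeds the degree $|S_j|-|f(S_j)|$ of $f_{S_j}$ itself as soon as $e(E_j)\geq 2$. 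Already in the lowest case your formula is wrong: the paper's value at the degree-zero element is $(12)([x],[y])=[x|y]+[y|x]+[(121)(x,y)]$ (char.\ $2$), not the bare shuffle, because the shuffle alone is not a chain map on the bar complex of a merely homotopy-commutative algebra; for the same reason your plan to pin down a ``leading shuffle term'' by corestriction to the cogenerators of the cofree coalgebra does not yield a chain map that the corrections could then repair term by term.

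Consequently the heart of the proof is not, as you frame it, verifying that the boundary terms of a given formula cancel; the slot coefficients must be manufactured so that the cancellation identity holds. The paper defines coefficient elements $C(f;e^1,\dots,e^k)\in\E(e^1+\cdots+e^k)$ by induction on $(k,m,e^1,\dots,e^k)$, setting $C=s_a(X_1+X_2)$, where $s_a$ is the explicit contracting homotopy of $\E$ satisfying $ds_a+s_ad=id+\iota_ar_a$, $X_1=C(df;\cdots)$, and $X_2$ is a signed sum over $2$-indices of products of previously constructed coefficients. The chain-map property then reduces to the identity $dC=X_1+X_2+X_3$ of Lem.~\ref{lemchain}. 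Your taxonomy of cancellations --- same-color merge, face of $u$, internal differential at a merged slot --- corresponds exactly to the three terms $X_3$, $X_1$, $X_2$, so your intuition about which boundary terms should pair off is sound; but proving the identity requires both a delicate bijection of index sets and a non-degeneracy property of the $C$'s guaranteeing $\iota_ar_a(X_1+X_2+X_3)=0$, which is what makes $s_a$ return an exact solution rather than one up to homotopy. The ``single structural identity in $\E$ relating a face of a sequence to an operadic composite'' that your inductive step relies on is false for restrictions of $u$; the contracting homotopy (or an equivalent acyclicity argument producing the corrections recursively) is the missing idea, and without it your induction on the total number of letters cannot be closed.
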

Here, the sequence operad is a small combinatorial model of $E_\infty$-operad introduced by J.E.McClure and J.H.Smith \cite{mccluresmith}, which naturally acts on the normalized cochain. 
We warn the reader that the chain map $\Phi_k$ does \textbf{not} define an operad action. So this result cannot be used for iteration.\\
\indent The utility of the map $\Phi_k$ is that Steenrod operations can be derived from it. In fact, we can apply the framework of J.P.May \cite{may} to the map $\Phi_k$ in order to define operations $P^s$ and $\beta P^s$.  We show these operations are isomorphic to the corresponding operations on the cohomology of a based loop space by a simple application of an argument of Fresse (see section \ref{steenrod}). \\
\indent We shall mention preceeding works. H.J.Baues \cite{baues} defined a product on 
the bar complex of normalized cochains which is equivalent to the cup product.  T.Kadeishvili \cite{kadeishvili} defined $\cup_i$-product on the bar complex, generalizing Baues' construction (over a field of characteristic 2). Thm.        \ref{mainthm} is considered as a generalization of them as the operations of Baues and Kadeishvili are equal to evaluations of $\Phi_2$ at some elements, see Prop.\ref{propcomparison}. For another generalization, see Fresse \cite{fresse2}. \\
\indent In the last section, we define a diagonal on the sequence operad. An immediate consequence is that the tensor of two $\E$-algebra has functorial $\E$-algebra structure. For Barrat-Eccles operad \cite{bergerfresse}, which is another combinatorial model of $E_\infty$-operad, a diagonal is already known. As the action of the sequence operad on the normalized cochain is transparent, the tensor product provides a simple model of  product and smash of spaces, for example. \\

\indent \textbf{Notation and Terminology}\quad (1) We fix a base ring $\base$. All complexes are defined over $\base$ and considered as cohomologically graded, i.e., differentials raise degree. For usually homologically graded complexes such as operads, we implicitly regard them as cohomologically graded by negating degree. For an element $x$ of a complex, $|x|$ denotes its degree and we put $||x||=|x|-1$.\\
\indent A subset of an ordered set is always considered as an ordered set with the induced order. For an integer $k\geq 1$, $\bar{k}$ denotes the ordered set $\{1,\dots, k\}$ with the usual order.\\
\indent Let $S$ be a finite set. $|S|$ denotes the cardinality of $S$. $||S||$ denotes $|S|-1$ if $S$ is non-empty, $0$ otherwise.\\

\indent (2) For $k\geq 0$, $\Sigma_k$ denotes the $k$-th symmetric group
. As usual, an operad $\mathcal{O}=\{\oper (k)\}_{k\geq 0}$ is a sequence of $\Sigma_k$-modules $\oper (k)$ equipped with composition multiplications which satisfy associativity and equivariance (see \cite{krizmay}). We denote by $\bar{\oper}$ the sequence obtained from $\oper$ by replacing $\oper (0)$ with the zero module. $\bar{\oper}$ has natural operad structure induced from $\oper$.   \\
\indent The sequence operad is defined in \cite{mccluresmith}. The surjection operad in \cite{bergerfresse} is the same thing except for sign difference. By definition, its $k$-th module is the free graded abelian group generated by non-degenerate sequences $f:\bar{m}\to \bar{k}$, whose homological degree is $m-k$. 
As in \cite{mccluresmith}, a non-degenerate sequence $f$ is presented as $(f(1)f(2)\dots f(m))$. For example, $(12)$ denotes the identity on $\bar{2}$. We denote by $\E$ the sequence operad tensored with $\base$. We entirely follow the sign rules of \cite{mccluresmith}. For differential $d$ ($\partial$ in \cite{mccluresmith}), we write that
\[
df=\sum_{q=1}^m(-1)^{\tau' _f(q)}d_qf, 
\]
where $d_qf=f_{\bar{m}-\{q\}}$ and $\tau'_f(q)=\tau _f(q)-f(q)$ in the notation of \cite{mccluresmith}. To simplify notations we put $f\cdot g:=(12)(f,g)$ for $f,g\in\E$. We denote by $\diamond$ the action of the symmetric groups, like $f\diamond \sigma$ for $f\in\E(k)$ and $\sigma\in \Sigma_k$. We omit $\cdot$ and $\diamond$ if it does not cause confusion. \\
\indent We use chain maps $r_a:\E(k)\to \E(k-1)$, $\iota_a:\E(k-1)\to \E(k)$ and a chain homotopy $s_a:\E(k)\to\E(k)$ for $a\in\bar{k}$. For a non-degenerate sequence $f:\bar{m}\to \bar{k}$, $r_a(f)=0$ if $|f^{-1}(a)|\geq 2$, or otherwise, $r_a$ removes $a$ from $f$ and decreases values bigger than $a$ by 1. $\iota_a$ places $a$ at the begining of $f$ and increases values of $f$ bigger than or equal to $a$ by 1. $s_a$ places $a$ at the begining of $f$ and multiply $(-1)^{\sum_{i<a}||f^{-1}(i)||}$. $r_a$, $\iota_a$, and $s_a$ satisfy
\[
ds_a+s_ad=id+\iota _ar_a.
\]
These chain maps and chain homotopy are variations of those defined in \cite{mccluresmith}. \\
\indent $\E$ naturally acts on the normalized cochain complex $N^*(X,\base)$ of a simplicial set $X$ (see \cite{mccluresmith}). If $X$ is pointed, the reduced normalized cochain $\bar{N}^*(X,\base )$ has induced action of $\bar{\E}$.\\
\indent Let $\A$ denote the associative operad. we always identify $\A$ with the suboperad of $\E$ consisting of degree zero elements.\\

\indent (3) Let $A$ be a (not necessarily unital) associative dg-algebra. The bar complex $BA$ of $A$ is defined as follows. The module of degree $d$ is given by 
\[
B^dA=\bigoplus_{l\geq 1}\bigoplus _{d_1+\cdot +d_l-l=d}A^{d_1}\otimes_\base\cdots\otimes_\base A^{d_l}
\]
A tensor $a_1\otimes\cdots\otimes a_l$ considered as an element of $BA$ is denoted by $[a_1|\cdots|a_l]$ so that $|[a_1|\cdots|a_l]|=\sum_{j=1}^l||a_j||$. We call $l$ the length of $[a_1|\cdots|a_l]$. 
The differential $d$ is the sum of two differentials $d=d_0+d_1$, where
\[
\begin{split}
d_0([a_1|\cdots |a_l])=&\sum_{j=1}^l(-1)^{\sum_{k<j}||a_k||}[a_1|\cdots |da_j|\cdots |a_l]\\
d_1([a_1|\cdots |a_l])=&\sum_{j=1}^{l-1}(-1)^{\sum_{k\leq j}||a_k||}[a_1|\cdots |a_ja_{j+1}|\cdot a_l]
\end{split}
\]
For an $\bar\E$-algebra $A$, we may regard $A$ as an associative algebra by forgetting structures and form the bar complex $BA$.
\section{The chain map $\Phi_k$}
In this section, we define the chain map $\Phi_k$ in Thm.\ref{mainthm} and show some properties of it. Let $A$ be an $\bar{\E}$-algebra. In the following, we write $f(\mbf{x}^1,\dots, \mbf{x}^k):=\Phi_k(f,\mbf{x}^1,\dots, \mbf{x}^k)$ for $f\in\E(k)$ and $\mbf{x}^1,\dots,\mbf{x}^k\in BA$.\\
\indent Let $\mbf{x}^i$ be an element of the form $[x_1^i|\cdots |x_{p^i}^i]$ for each $i=1,\dots,k$ ($x^i_t\in A$, $p^i\geq 1$). Roughly speaking, $f(\mbf{x}^1,\dots, \mbf{x}^k)$ is of the following form.
\[
f(\mbf{x}^1,\dots, \mbf{x}^k)=\sum_{\alpha}\ \big[f^\alpha _1(y^\alpha_{1},\dots y^\alpha_{q_1})\big| f^\alpha _2(y^\alpha_{q_1+1},\dots, y^\alpha_{q_2})\big|\cdots \big|f^\alpha_l  (y^\alpha_{q_{l-1}+1},\dots y^\alpha_{p}) \big]
\] 
Here, the sum runs through a certain set of indices and 
$f^\alpha _1,\dots, f^\alpha _l$ are elements of $\bar \E$ associated to an index $\alpha$, which we call \textit{coefficient elements}. $y_1^\alpha,\dots ,y_{p}^\alpha$ is a permutation of $x^1_1,\dots, x^1_{p^1},x^2_1,\dots, x^2_{p^2},\dots,$ $x^k_1,\dots,x^k_{p^k}$ associated to $\alpha$ ($p=p^1+\cdots +p^k$). (In practice, the permutation is  a $(p^1,\dots, p^k)$-shuffle.)  We first define the set of indices, then define coefficient elements inductively, using the indices.  
\subsection{Indices}\label{index}
Let $p^1,\dots, p^k$ be positive integers.
An \textit{elementary decomposition of the set $\overline{p^1}\sqcup\cdots\sqcup\overline{p^k}$ with $l$-pieces} is $l$-tuple $(E_1,\dots, E_l)$ of non-empty subsets of $\overline{p^1}\sqcup\cdots\sqcup\overline{p^k}$ such that 
\begin{itemize}
\item For each $i\in\bar k$ and each pair $j<j'$ in $\bar{l}$, if $E_j\cap \ol{p^i}$ and $E_{j'}\cap \ol{p^i}$ are non-empty, any element of $E_j\cap \ol{p^i}$ is smaller than any element of $E_{j'}\cap \ol{p^i}$,
\item $E_1\sqcup\cdots\sqcup E_l=\overline{p^1}\sqcup\cdots\sqcup\overline{p^k}$.
\end{itemize}
For example, let $k=2$ and $(p^1,p^2)=(2,3)$. We write $\bar 2=\{a_1<a_2\}$ and $\bar 3=\{b_1<b_2<b_3\}$. An elementary decomposition of $\bar 2\sqcup\bar 3$ with 3 pieces is given by $E_1=\{b_1\}$, $E_2=\{a_1, b_2, b_3\}$, $E_3=\{a_2\}$.\\
\indent Recall from \cite{mccluresmith} the notion of an \textit{overlapping partition}. An overlapping partition of a totally ordered set $T$ with $l$-pieces is an $l$-tuple $(A_1,\dots, A_l)$ of non-empty subsets of $T$ such that the last element of $A_j$ is equal to the first element of $A_{j+1}$ for each $j=1,\dots, l-1$.\\
\indent Let $f:\bar{m}\rightarrow \bar{k}$ be a non-degenerate sequence. A \textit{valuewise overlapping partition of $f$ with $l$ pieces} is an $l$-tuple $(S_1,\dots,S_l)$ consisting of non-empty subsets of $\bar m$ which satisfies the following condition: For each $i\in\bar{k}$, if $j_1<\cdots<j_s$ denote the all $j$'s such that $i\in f(S_j)$, then $(S_{j_1}\cap f^{-1}(i),\dots,S_{j_s}\cap f^{-1}(i))$ is an overlapping partition of $f^{-1}(i)$. Some examples of valuewise overlapping partitions of $(1212)$ with 2 pieces are
\[
(S_1,S_2)=(\fbox{121} 2, 1 \fbox{212}),\  (\fbox{1212}, 12\fbox{12}),\  (\fbox{1}2\fbox{1}2, 1\fbox{2}1\fbox{2}), \ (1\fbox{2}12, \fbox{1212}).
\]
Here, the first example denotes $(\{1,2,3\}, \{2,3,4\})$ for instance.
\begin{defi}
Let $f:\ol{m}\to \ol{k}$ be a non-degenerate sequence. Let $l$ and $p^1,\dots p^k$ be positive integers. An \textup{$l$-index $\alpha=(E_1,\dots, E_l; S_1,\dots, S_l)$ of} $(f;p^1,\dots,p^k)$ consists of 
\begin{itemize}
\item An elementary decomposition $(E_1,\dots, E_l)$ of $\ol{p^1}\sqcup\dots\sqcup\ol{p^k}$ with $l$-pieces and
\item A valuewise overlapping partition $(S_1,\dots,S_l)$ of $f$ with $l$ pieces.
\end{itemize}
such that $f(S_j)=\{i\in\ol{k}|\ol{p^i}\cap E_j\not=\emptyset \}$.\\
\indent The set of all $l$-indices of $(f;p^1,\dots,p^k)$ is denoted by $A_l(f;p^1,\dots, p^k)$ or $A_l(f)$ if there is no danger of confusion.
\end{defi}
We will associate a term of length $l$ to each $l$-index.


\subsection{Coefficient elements} We shall define  a coefficient elemement
\[
C(f;e^1,\dots, e^k)\in \E(e^1+\cdots +e^k)_{e^1+\cdots +e^k+m-k-1}
\]
 \vspace{2pt} for each element $f\in\E(k)_{m-k}$ and each integers $e^1,\dots ,e^k\geq 1$. Here, $\E(r)_d$ denotes the module of homological degree $d$ of $\E(r)$.\\

We use induction to define coefficient elements. We give the lexicographical order on the set \\
$\{(k,m,e^1,\dots, e^k)|k,m,e^1,\dots, e^k \geq 1\}$. 
In other words, $(k',m',s^1,\dots, s^{k'})< (k, m,e^1,\dots, e^k)$ if
\[
k'<k \text{ or } (k'=k \text{ and } m'<m)\\
 \text{ or } (k'=k, m'=m, s^1= e^1,\dots, s^{i-1}=e^{i-1}, s^i<e^i \text{ for some $i\in\bar k$ }).
\]
We put $C(f,1)=(1)\in\E(1)_0$ for $f=(1)\in\E(1)_0$. 
 Suppose $C(f';s^1,\dots, s^{k'})$ is defined for $f'\in \E(k')_{m'-k'}$ and $s^1,\dots,s^{k'}\geq 1$ such that 
$(k',m',s^1,\dots, s^{k'})<(k,m,e^1,\dots,e^k)$.\\
\indent Let $f:\ol{m}\to \ol{k}$ be a non-degenerate sequence. We shall introduce some notations. Let $\alpha=(E_j;S_j)\in A_l(f;e^1,\dots, e^k)$ be an $l$-index. For each $j=1,\dots, l$, and $i=1,\dots k$, we put 
\[
e^i(E_j)=|\ol{p^i}\cap E_j|,\quad e(E_j)=\sum_{i=1}^ke^i(E_j).
\] 
Let $f_{S_j}$ denote the composition:
\[
\ol{|S_j|}\cong S_j\stackrel{f|_{S_j}}{\to} f(S_j)\cong \ol{|f(S_j)|},
\]
where $\ol{|S_j|}\cong S_j$ and $f(S_j)\cong \ol{|f(S_j)|}$ are the order-preserving bijections. If $f_{S_j}$ is non-degenerate as a sequence, we put
\[
C(f_{S_j};E_j)=C(f_{S_j};e^{i_1}(E_j),\dots, e^{i_t}(E_j)),
\]
where $i_s$'s are integers such that $f(S_j)=\{i_1<\dots <i_t\}$, otherwise, we put $C(f_{S_j};E_j)=0$\\
\indent Let $\phi _{e^1,\dots, e^k}:\ol{e^1}\sqcup\cdots\sqcup\ol{e^k}\to \ol{e^1+\cdots +e^k}$ be the bijection given by $\ol{e^i}\ni r\mapsto e^1+\cdots +e^{i-1}+r$. We give $\ol{e^1}\sqcup\cdots\sqcup\ol{e^k}$ a total order  such that $\phi _{e^1,\dots, e^k}$ preserves the order. Note that an elementary decomposition $(E_1,\dots, E_l)$ of $\ol{e^1}\sqcup\cdots\sqcup\ol{e^k}$ defines a map $\mu=\mu_{E_1,\dots, E_l}:\ol{e^1}\sqcup\cdots\sqcup\ol{e^k}\to  \ol{e^1+\cdots +e^k}$ such that the restriction $\mu|_{E_j}$ is order-preserving for each $j$ and any element of $\mu(E_j)$ is smaller than any element of $\mu(E_{j'})$ for each $j<j'$. ($\mu$ is a $(e^1,\dots,e^k)$-shuffle.) For an $l$-index $\alpha=(E_j,S_j)$, $\sigma_\alpha\in \Sigma_{e^1+\cdots +e^k}$ denotes the permutation given by the composition
\[
\xymatrix{\ol{e^1+\cdots +e^k}\ar[rr]^{\phi_{e^1,\dots ,e^k}^{-1}}&&\ol{e^1}\sqcup\cdots\sqcup\ol{e^k}\ar[rr]^{\mu_{E_1,\dots, E_l}}&&\ol{e^1+\cdots +e^k}.}
\]
\indent We put 
\[
\begin{split}
X_1(f;e^1,\dots, e^k)&=C(df;e^1,\dots e^k),\\
X_2(f;e^1,\dots, e^k)&=\sum_\alpha (-1)^{\theta (f,\alpha)}(C(f_{S_1};E_1)\cdot C(f_{S_2};E_2))\diamond \sigma_\alpha .
\end{split}
\]
Here, the sum runs through all 2-indices $\alpha=(E_1,E_2;S_1,S_2)\in A_2(f;e^1,\dots, e^k)$, and 
\[
\theta(f,\alpha)=1+|f_{S_1}|+e(E_1)(|f_{S_2}|+e(E_2)-1)+\sum_{i'<i}e^i(E_1)\cdot e^{i'}(E_2)+\sum_{i'<i}||f_{S_1}^{-1}(i)||\cdot ||f_{S_2}^{-1}(i')||.
\]
Finally we put
\[
C(f;e^1,\dots, e^k)=s_a(X_1(f;e^1,\dots, e^k)+X_2(f;e^1,\dots, e^k)),
\]
where $a=\sum_{i<f(1)}e^{i}+1$ (see Notation and Terminology (2)). If $(S_1,\dots, S_l)$ is a valuewise overlapping partition of $f$ such that $f_{S_j}$ is non-degenerate for each $j$, $|f|=|f_{S_1}|+\cdots+|f_{S_l}|$. So the element defined above actually has the prescribed degree. For a general element $f\in\E(k)_{m-k}$, we define the coefficient element by extending the above definition linearly.
\begin{exa}
\textup{(1)} $C((12);1,q)=(-1)^{q(q+3)/2}(12131\dots, q+1,1)$, $C((12);p,q)=0$ for $p>1$. \\
\textup{(2)} $C((123);1,1,1)= -(12131)-(13121)+(12321)$, $C((123);1,2,1)=-(1232141)-(1213141)-(1214131)+(1213431)-(1412131)$. (For further examples, see Prop.\ref{propcomparison}.)
\end{exa}


\subsection{Definition of $\Phi_k$}
Let $A$ be an $\bar{\E}$-algebra. Let $\mathbf{x}^1=[x^1_1|\cdots |x^1_{p^1}],\dots, \mathbf{x}^k=[x^k_1|\cdots |x^k_{p^k}]$ be elements of $BA$ of length $p^1,\dots,p^k$, respectively. Let $\alpha=(E_1,\cdots,E_l;S_1,\cdots, S_l)\in A_l(f;p^1,\dots p^k)$ be an $l$-index. 
Let $E_j\mbf{x}$ denote the "substitution" of $\mbf{x}^1$,$\dots$,$\mbf{x}^k$ to $E_j$. In other words, $E_j\mbf{x}$ is a $e(E_j)$-tuple of variables such that $x^i_t$ occurs in $E_j\mbf{x}$ if and only if $t\in\ol{p^i}$ belongs to $E_j$ and the order of occurrences of variables is the same as the order occurrences of corresponding elements of $E_j$. For example, if $(E_1,E_2,E_3)=(\{b_1\},\{a_1,b_2,b_3\},\{a_2\})$, $E_1\mbf{x}=(x^2_1)$, $E_2\mbf{x}=(x^1_1,x^2_2,x^2_3)$, and $E_3\mbf{x}=(x^1_2)$.  We put 
\[
f(\mbf{x}^1,\dots, \mbf{x}^k)=\sum_{l=1}^{p}\sum_{\alpha}(-1)^{\kappa (f,\alpha,\mbf{x})}\big[ C(f_{S_1};E_1)(E_1\mbf{x})\big|\cdots 
\big|C(f_{S_l};E_l)(E_l\mbf{x})\big]\in B(A).
\]
Here, $p=p^1+\cdots+p^k$ and $\alpha=(E_1,\dots, E_l;S_1,\dots,S_l)$ runs through all $l$-indices in $A_l(f;p^1,\dots, p^k)$, and 
\[
\begin{split}
\kappa (f,\alpha,\mbf{x})=\sum_{\scriptsize 
\begin{array}{c}
(i,j),(i',j')\\
i>i'\\
j<j'
\end{array}}&\Big\{||f_{S_j}^{-1}(i)||\cdot ||f_{S_{j'}}^{-1}(i')||+||E_j\mbf{x}^i||\cdot ||E_{j'}\mbf{x}^{i'}||\Big\}\\ 
+
\sum_{j'<j}|f_{S_j}|&\cdot ||E_{j'}(\mbf{x})||+\sum_{j=1}^l\Big\{\sum_{i
\leq i'}e^{i'}(E_j)\cdot ||E_j\mbf{x}^i||+\sum_{i=1}^k\sum_{t=1}^{e^i(E_j)}t\cdot ||E_j\mbf{x}^i_t||\Big\},
\end{split}
\]
where $E_j\mbf{x}^i_t$ is the variable corresponding to the $t$-th element of $E_j\cap \ol{p^i}$, i.e., $E_j\mbf{x}^i_t=x^i_T$, $T=e^1(E_j)+\cdots e^{i-1}(E_j)+t$, and 
\[
||E_j\mbf{x}^i||=\sum_{t=1}^{e^i(E_j)}||E_j\mbf{x}^i_t||,\quad ||E_j\mbf{x}||=\sum_{i=1}^k||E_j\mbf{x}^i||.
\]
\indent For general elements of $\E (k)$, we extend the above definition linearly and  obtain a map 
$\Phi_k: \E (k)\otimes_\base BA^{\otimes k}\to BA$.
\begin{exa}Let $\base$ be a field of characteristic 2.\\
\textup{(1)} $(12)([x],[y])=[x|y]+[y|x]+[(121)(x,y)]$.\\
\textup{(2)} $(12)([x],[y_1|y_2])=[x|y_1|y_2]+[y_1|x|y_2]+[y_1|y_2|x]+[(121)(x,y_1)|y_2]+[y_1|(121)(x,y_2)]+[(12131)(x,y_1,y_2)]$.
\end{exa}


\subsection{Compatiblity with differentials and invariance under the action of $\Sigma_k$}
We shall show $\Phi_k$ is a chain map and invariant under the action of $\Sigma_k$. We first set 
\[
X_3(f;e^1,\dots ,e^k)=\sum_{i=1}^{k}\sum_{t=1}^{e^i-1}(-1)^{\lambda (f,i,t,e^1,\dots, e^k)}C(f;e^1,\dots, e^i-1,\dots, e^k)((1),\dots, (12),\dots ,(1)).
\]
Here, $((1),\dots, (12),\dots, (1))$ denotes the $(\sum_{i=1}^{k} e^i-1)$-tuple such that its $(e^1+\cdots+e^{i-1}+t)$-th component is the identity $(12)\in\E(2)_0$ on $\bar{2}$ and others are $(1)$'s $\in \E(1)_0$, and 
\[
\lambda (f,i,t,e^1,\dots, e^k)=1+t+|f|+\sum_{i'\geq i}e^{i'}.
\]
\begin{lem}\label{lemchain}
\textup{(1)} For each non-degenerate sequence $f:\bar{m}\to \bar{k}$ and each positive integers $e^1,\dots e^k$,
\[
dC(f;e^1,\dots, e^k)=X_1(f;e^1,\dots, e^k )+X_2(f;e^1,\dots, e^k)+X_3(f;e^1,\dots , e^k).
\]
\textup{(2)} If $k\geq 2$, $C(f;e^1,\dots, e^k)$ is a linear combination of non-degenerate sequences $g$ which satisfy the following conditions:
\begin{enumerate}
\item Each of $g(1)$ and $g(2(\sum_{i=1}^k e^i)+m-k-1)$ occurs  at least two times in the 
sequence $g$.
\item If $b\in \ol{\sum_{i=1}^k e^i}$ occurs exactly one time in $g$, both sides of $b$ are occupied by the same number. In other words, $g$ is of the form $(\cdots cbc \cdots)$.
\end{enumerate}
\end{lem}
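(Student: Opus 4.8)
The plan is to prove both parts simultaneously by induction on the well-ordered set $\{(k,m,e^1,\dots,e^k)\}$ used to define the coefficient elements, the base case $C((1);1)=(1)$ being immediate. I abbreviate $e=e^1+\cdots+e^k$ and $X_i=X_i(f;e^1,\dots,e^k)$. For part (1) the starting point is the defining equation $C(f;e^1,\dots,e^k)=s_a(X_1+X_2)$ together with the homotopy relation $ds_a+s_a d=\mathrm{id}+\iota_a r_a$, i.e. $ds_a=\mathrm{id}+\iota_a r_a-s_a d$. Applying $d$ to the definition gives
\[
dC(f;e^1,\dots,e^k)=(X_1+X_2)+\iota_a r_a(X_1+X_2)-s_a d(X_1+X_2),
\]
so, since the target value of (1) is $X_1+X_2+X_3$, proving (1) is equivalent to the reduced identity
\[
\iota_a r_a(X_1+X_2)-s_a d(X_1+X_2)=X_3.
\]

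To establish this reduced identity I would compute $d(X_1+X_2)$ using the inductive hypothesis. Because $X_1=C(df)$ and $X_1(df)=C(d^2f)=0$, the inductive form of (1) applied to the strictly smaller sequences $d_qf$ occurring in $df$ gives $dX_1=X_2(df)+X_3(df)$. For $dX_2$ I would use that $\diamond\sigma_\alpha$ commutes with $d$ and that the concatenation $u\cdot v=(12)(u,v)$ obeys the Leibniz rule, and then substitute the inductive formula $dC(f_{S_j};E_j)=X_1(f_{S_j};E_j)+X_2(f_{S_j};E_j)+X_3(f_{S_j};E_j)$ for each of the two factors. Both factors have strictly smaller index: $f_{S_j}$ has at most $k$ values, and when it has exactly $k$ the splitting $E_1\sqcup E_2$ forces the $e$-datum of each piece to be strictly smaller in the lexicographic order.

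The heart of the argument is then a term-by-term reconciliation of the resulting expansion with $X_3$, organized by the shape of the index. I expect three matchings. The $X_2(df)$ produced by $dX_1$ should cancel the $C(df_{S_j};E_j)$ contributions of $dX_2$, reflecting the compatibility of the boundary $d$ with restriction to the pieces of a valuewise overlapping partition. The $X_2(f_{S_j};E_j)$ contributions correspond to the $3$-indices of $f$ obtained by refining one of the two blocks, and should cancel in pairs---refining the first versus the second block producing opposite signs---an instance of the associativity of overlapping partitions. Finally the terms coming from the degenerate boundary pieces, which are exactly the ones detected by $\iota_a r_a$ (nonzero only when $a$ occurs once), together with the $X_3(df)$ and $X_3(f_{S_j};E_j)$ terms, should assemble into $X_3$; here the insertion of $(12)$ in the definition of $X_3$ appears as the operadic doubling of the value $a$ that $s_a$ prepends. \emph{The main obstacle is precisely this bookkeeping}: checking that the signs $\tau'_f(q)$, $\theta(f,\alpha)$, $\lambda$ and the permutations $\sigma_\alpha$ conspire to make the cancellations exact. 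I would carry this out by fixing a target monomial and comparing its coefficient on the two sides of the reduced identity.

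For part (2) I would again induct, reading the structure off $C(f;e^1,\dots,e^k)=s_a(X_1+X_2)$. Two facts drive the argument. First, every coefficient element lies in $\E(e)$ and is therefore a surjection, so each value, in particular $a$, occurs at least once; as $s_a$ prepends one further $a$, the value $g(1)=a$ occurs at least twice. Second, the concatenation $u\cdot v$ places the values of $u$ and $v$ in disjoint blocks, so no value straddles the junction and a product of sequences satisfying (1)--(2) again satisfies them, while $\diamond\sigma_\alpha$ preserves all multiplicities and adjacencies. By the inductive hypothesis every monomial of $X_1+X_2$ satisfies (1)--(2)---for $X_1=C(df)$ this is the smaller-$m$ case, and for $X_2$ it is a product of two strictly smaller coefficient elements. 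Prepending $a$ leaves the last value and its multiplicity untouched, so condition (1) holds at the right end, and the only new adjacency is between $a$ and the former first value; this respects (2) because $a$ occurs at least twice and the former first value, being the start of a monomial of $X_1+X_2$, already occurs at least twice by the inductive hypothesis. I expect this part to be routine once the surjectivity of coefficient elements and the disjointness of value-blocks under $\cdot$ are recorded.
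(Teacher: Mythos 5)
Your strategy for part (1) is essentially the paper's: the same lexicographic induction, the same use of $ds_a+s_ad=\mathrm{id}+\iota_ar_a$, and your ``reduced identity'' is equivalent to the paper's reduction to the two statements $d(X_1+X_2+X_3)=0$ and $\iota_ar_a(X_1+X_2+X_3)=0$ (together with the fact, which you only gesture at, that every monomial of $X_3$ begins with $a$, so that $s_aX_3=0$). The problem is that the substance of the proof is exactly the bookkeeping you defer, and your anticipated pairing is wrong at one point. The terms $X_1\to X_2$ (i.e.\ the $X_2(d_qf)$ arising from $dX_1$) do \emph{not} cancel all of $X_2\to X_1$ (the $C(d_q(f_{S_j});E_j)$ contributions). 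In the paper's notation the latter are indexed by $T_1\sqcup T_2$, and only the subset $T_1'\sqcup T_2'$ --- where the deleted letter $q$ lies in a single piece $S_s$ --- matches bijectively with the index set $R$ of $X_1\to X_2$; even there, inverting the matching requires a four-case analysis on the position of the overlapping point of $\bigl(S_1'\cap(d_qf)^{-1}(f(q)),\,S_2'\cap(d_qf)^{-1}(f(q))\bigr)$ relative to $q$ in order to decide into which piece $q$ is re-inserted. The leftover terms $T_1''$, $T_2''$, where $q$ is an overlapping point belonging to both $S_1$ and $S_2$, have no partner in $X_1\to X_2$ at all: they cancel against each other via $q\mapsto q'=$ the predecessor of $q$ in $f^{-1}(f(q))$. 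A monomial-by-monomial coefficient comparison organized by your three expected matchings would get stuck on these terms, so this is a missing idea, not mere sign-checking; the same caveat applies to your unverified claim that the $\iota_ar_a$-detected terms of $X_2$ and $X_3$ assemble correctly (the paper isolates a unique surviving diagram in the composition $(12)\circ_a C(f;\dots,e^{f(1)}-1,\dots)$ and splits into the cases $e^{f(1)}\geq 2$ and $e^{f(1)}=1$).

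Part (2) of your proposal contains a genuine error: the assertion that every monomial of $X_1+X_2$ already satisfies conditions (1)--(2) is false. The inductive hypothesis (2) applies only to pieces with at least two values; when $|f(S_j)|=1$ the factor is $C(f_{S_j};E_j)=C((1);e(E_j))$, which equals $(1)$ when $e(E_j)=1$ (and $0$ otherwise), and $(1)$ violates both conditions. Concretely, for $f=(12)$ and $e^1=e^2=1$, $X_2$ consists of $\pm(12)$ and $\pm(21)$, whose first and last letters each occur exactly once. The lemma holds only because $s_a$ interacts nontrivially with these bad monomials: those beginning with $a$ die by degeneracy ($s_1(12)=(112)=0$, and likewise the monomial $(1213)$ occurring in $X_2$ for $(f;e^1,e^2)=((12);1,2)$ dies under $s_1$), while for the others prepending $a$ supplies the missing second occurrence ($s_1(21)=(121)$, $s_1(2131)=(12131)$, recovering the paper's examples). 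So your induction must either be strengthened to describe precisely which monomials of $X_1+X_2$ violate the conditions and why each is annihilated or repaired by $s_a$ --- using the specific value $a=\sum_{i<f(1)}e^i+1$, the form of $\sigma_\alpha$, and the elementary-decomposition constraint on singleton pieces --- or proceed by a case analysis on indices with singleton-valued pieces. Your left-end argument (surjectivity gives one occurrence of $a$, $s_a$ a second) and the disjoint-blocks observation for $u\cdot v$ are correct, but they do not suffice for the right-end and interior conditions, which is where the actual content of (2) lies.
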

\begin{proof}
We use induction. Suppose that (1) and (2) hold for $f'\in \E(k')_{m'-k'}$ and $s^1,\dots,s^{k'}\geq 0$ such that $(k',m',s^1,\dots, s^{k'})<(k,m,e^1,\dots,e^k)$ in the lexicographical order.
To show the former statement of the lemma, it is enough to show $d(X_1+X_2+X_3)=0$ and $\iota_ar_a(X_1+X_2+X_3)=0$ for $a=\Sigma_{i<f(1)}e^1+1$ (see Notation and Terminology (2)).\\
\indent We shall show $d (X_1+X_2+X_3)=0$. Using inductive hypothesis for (1), we expand $d(X_1+X_2+X_3)$ as follows.
\[
d(X_1+X_2+X_3)=\sum_{n, n'}(X_n\to X_{n'})
\]
Here, $X_n\to X_{n'}$ is the term obtained by replacing coefficient elements $C(g;\cdots)$ appearing in $X_n$ by $X_{n'}(g;\cdots )$ for example, $X_3\to X_2$ denotes the term $\sum \pm X_2(f;e^1,\dots, e^j-1,\dots, e^k)(\dots, (12),\dots)$. It is easy to see $X_1\to X_1=X_2 \to X_2=X_3\to X_3=(X_2\to X_3)+(X_3\to X_2)=(X_1\to X_3)+(X_3\to X_1)=0$. We shall show $
(X_1\to X_2)+(X_2\to X_1)=0$.
\[
X_1\to X_2=
\sum_{q=1}^m\sum _{\alpha' =(E'_j;S'_j)\in A_2(d_qf)}(-1)^{\tau '_q(f)+\theta (d_qf,\alpha ^q)}(C((d_q f)_{S_1'};E_1')\cdot C((d_qf)_{S_2'};E_2'))\diamond \sigma_{\alpha'}
\]
\[
\begin{split}
X_2\to X_1=
\sum_{\alpha =(E_j;S_j)\in A_2(f)}
  \Big\{ & \sum _q(-1)^{\theta (f,\alpha )+\tau '_q(f_{S_1})}C(d_q  (f_{S_1});E_1)\cdot C(f_{S_2};E_2)\\
+&\sum _q(-1)^{\theta (f,\alpha )+\tau '_q(f_{S_2})+|C(f_{S_1};E_1)|}C(f_{S_1};E_1)\cdot C(d_q(f_{S_2});E_2)\Big\}
\diamond \sigma_{\alpha }.
\end{split}
\]
So the terms of $X_1\to X_2$ is indexed by the set $R$ of pairs $(q,(E_j',S_j')_{j=1,2})$ consisting of an integer $q\in \bar{m}$ and 2-index $(E_j',S_j')\in A_2(d_q f;e^1,\dots e^k)$ such that $f(q)$ occurs at least two times in $f$, while the terms of $X_2\to X_1$ is indexed by the set $T_1\sqcup T_2$ where $T_s$ is the set of pairs $((E_j,S_j)_{j=1,2},q)$ consisting of 2-index of $f$ and an integer $q\in S_s$ such that $f(q)$ occurs at least two times in $f_{S_s}$ ($s=1,2$). We put
\[
T_s'=\{ ((E_j,S_j),q)\in T_s| q\not\in S_{s'}\text{ for }s'\not= s\}, \qquad T_s''=T_s-T_s'.
\]
Define a map $F:T_1'\sqcup T_2'\to R$ by $T_1'\ni((E_j,S_j),q)\mapsto (q,(E_1,E_2;S_1-{q},S_2))$, $T_2'\ni((E_j,S_j),q)\mapsto (q,(E_1,E_2;S_1,S_2-{q}))$. $F$ is well-defined as $((S_1-\{q\})\cap (d_{q}f)^{-1}(i), S_2\cap (d_{q}f)^{-1}(i))$ is an overlapping partition of $(d_{q}f)^{-1}(i)$ and similarly for $T_2'$. The term corresponding to an element of $T_1'\sqcup T_2'$ is equal, up to sign, to the term corresponding to its image by $F$. We shall check the signs.
\[
\begin{split}
\theta(d_qf,\alpha' )-\theta(f,\alpha) &=1+\sum_{i'<f(q)}||f_{S_2}^{-1}(i')|| \\
\tau' _q(f)-\tau'_{q}(f_{S_1}) &=\sum_{i<f(q)}(||f^{-1}(i)||-||f_{S_1}^{-1}(i)||)+|\{q'\not\in S_1| q'< q,\  f(q')=f(q)\} |
\end{split}
\]
As $(S_1\cap f^{-1}(i),S_2\cap f^{-1}(i))$ is an overlapping partition, $||f^{-1}(i)||=||f_{S_1}^{-1}(i)||+||f_{S_2}^{-1}(i)||$ and $|\{q'\not\in S_1| q'< q,\  f(q')=f(q)\} |=0$. So we have $(\theta(d_qf,\alpha_q )+\tau' _q(f))-(\theta(f,\alpha)+\tau_{q}(f_{S_1}))\equiv 1$. Similarly for $T'_2$. ( Verification of signs is a tiresome but straightforward task. In the rest of the paper we leave it to the reader.)
\\
\indent The inverse $F'$ of $F$ is given as follows. Let $(q,(E_j',S_j'))\in R$. $(S_1'\cap(d_qf)^{-1}(f(q)), S_2'\cap(d_qf)^{-1}(f(q)))$ is naturally regarded as an overlapping partition of $f^{-1}(f(q))-{q}$ so we have the following four cases. (i)$S_1'\cap(d_qf)^{-1}(f(q))=\emptyset$, (ii) $S_2'\cap(d_qf)^{-1}(f(q))=\emptyset$, (iii) the overlapping point $u$ is smaller than $q$. (iv) the overlapping point $u$ is bigger than $q$. In the cases (i) and (iii), we put $F'((q,(E_j',S_j')))=((E_1',E_2';S_1',S_2'\cup\{q\}), q)\in T'_2$ and in the cases (ii) and (iv), we put  $F'((q,(E_j',S_j')))=((E_1',E_2';S_1'\cup\{q\},S_2'), q)\in T'_1$. Here, we regard $S_1'$ and $S_2'$ as subsets of $\bar{m}-\{q\}$.\\
\indent We define a map $G:T''_1\to T''_2$. For $((E_1,E_2;S_1,S_2),q)\in T''_1$,  we put $G((E_1,E_2;S_1,S_2),q)=((E_1,E_2;S_1-\{q\},S_2\cup\{q'\}), q')$ where $q'$ is the previous element of $q$ in $f^{-1}(f(q))$. $G$ is well-defined and bijective. This implies the terms indexed by $T''_1$ and the terms indexed by $T''_2$ cancel each other. Thus, we have shown $(X_1\to X_2)+(X_2\to X_1)=0$ and $d(X_1+X_2+X_3)=0$.\\
\indent We shall show that $\iota_ar_a(X_1+X_2+X_3)=0$. We use the inductive hypothesis for (2). We easily see $r_a(X_1)=0$. Among the terms of $X_2$, the ones which are not vanished by $r_a$ are indexed by 2-indices $(E_j,S_j)$ such that $E_1=\{1\}\subset\ol{e^{f(1)}}$ or ($e^{f(1)}=1$ and $E_2=\ol{e^{f(1)}}$). Among the terms of $X_3$, there is a unique diagram $D$ appearing in the composition $(12)\circ_{\Sigma_{i<f(1)}e^i+1}C(f;\dots, e^{f(1)}-1,\dots)$, such that the sequences of the form $h_D$ is not vanished by $r_a$ (see the definition of composition multiplications in \cite[Definition 2.23-2.25, Proposition 2.26]{mccluresmith}) . In the case $e^{f(1)}\geq 2$, the term in $X_2$ and the term in $X_1$ cancel. In the case $e^{f(1)}=1$, we see $\iota_ar_a((\text{the term of }E_1=\ol{e^{f(1)}}) + (\text{ the term of }E_2=\ol{e^{f(1)}}))=0$.\\
\indent (2) of the lemma easily follows from the hypothesis for (2).
\end{proof}

\begin{lem}\label{lemequivariance}
For $\sigma\in\Sigma_k$,
\[
C(f\diamond \sigma ;e^1,\dots, e^k)=(-1)^{\xi(f,\sigma, e^1,\dots,e^k)}C(f;e^{\sigma^{-1}(1)},\dots ,e^{\sigma^{-1}(k)})\diamond\sigma (e^1,\dots, e^k),
\] where $\xi(f,\sigma,e^1,\dots,e^k)=\sum_{i<i'\text{s.t.}\sigma(i)>\sigma(i')}e^i\cdot e^{i'}$ and $\sigma (e^1,\dots, e^k)\in\Sigma_{e^1+\cdots +e^k}$ is the block permutation (see \cite{krizmay}) .
\end{lem}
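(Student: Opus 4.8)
The plan is to argue by induction on the triple $(k,m,e)$ with $e=e^1+\cdots+e^k$, ordered lexicographically. This is a coarsening of the order used to define $C$, but it remains well-founded for the recursion $C(f;e^1,\dots,e^k)=s_a(X_1+X_2)$: the term $X_1=C(df;\dots)$ lowers $m$ while keeping $k$ and $e$ fixed, and each coefficient element $C(f_{S_j};E_j)$ occurring in $X_2$ has value set $f(S_j)$ of size $\le k$ and block-sum $e(E_j)=|E_j|<e$, since the complementary piece $E_{j'}$ is non-empty; hence it sits strictly below $(k,m,e)$. The crucial point is that the two sides of the asserted identity, $C(f\diamond\sigma;e^1,\dots,e^k)$ and $C(f;e^{\sigma^{-1}(1)},\dots,e^{\sigma^{-1}(k)})\diamond\sigma(e^1,\dots,e^k)$, sit at the \emph{same} level $(k,m,e)$, because $f$ and $f\diamond\sigma$ share the same $m$ and degree and the two block-data are permutations of each other. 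So they cannot be compared directly by the inductive hypothesis; instead I will feed the recursion on both sides into the hypothesis, which applies only to the strictly smaller objects appearing inside $X_1$ and $X_2$.

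After reducing to the case where $f$ is a single non-degenerate sequence and all $e^i\ge 1$ (both sides are linear in $f$), the base case $k=1$ is trivial, since $\Sigma_1$ is trivial, $\xi=0$ and $\sigma(e^1)=\mathrm{id}$. For the inductive step I will match the two recursions piece by piece. For $X_1$, note $d(f\diamond\sigma)=(df)\diamond\sigma$ by equivariance of the operad differential, and that $\xi(f,\sigma,e^1,\dots,e^k)=\sum_{i<i',\,\sigma(i)>\sigma(i')}e^ie^{i'}$ does not depend on the sequence, so the inductive hypothesis at the smaller value of $m$ gives $X_1(f\diamond\sigma;e)=(-1)^{\xi}X_1(f;e^{\sigma^{-1}(\cdot)})\diamond\sigma(e)$ directly. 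For $X_2$, I will set up a bijection between the $2$-indices $A_2(f\diamond\sigma;e^1,\dots,e^k)$ and $A_2(f;e^{\sigma^{-1}(1)},\dots,e^{\sigma^{-1}(k)})$: the valuewise overlapping partitions $(S_1,S_2)$ of $\bar m$ are literally the same (relabelling values does not move the domain), while the elementary decompositions $(E_1,E_2)$ are carried to one another by the block permutation $\sigma$. Under this bijection $(f\diamond\sigma)_{S_j}$ equals $f_{S_j}$ acted on by the sub-permutation of $f(S_j)$ induced by $\sigma$; since $f_{S_j}$ lies at a strictly smaller level, the inductive hypothesis transforms each $C((f\diamond\sigma)_{S_j};E_j)$, and reassembling the product $C(f_{S_1};E_1)\cdot C(f_{S_2};E_2)$ together with the permutation $\sigma_\alpha$ yields the claimed transformation of $X_2$.

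The remaining structural input is the interaction of $s_a$ with the block permutation. On the left the recursion places the value $a'=\sum_{i<(f\diamond\sigma)(1)}e^i+1$ at the front, whereas on the right $C(f;e^{\sigma^{-1}(\cdot)})$ uses $a=\sum_{i<f(1)}e^{\sigma^{-1}(i)}+1$; these two positions correspond to one another under $\sigma(e^1,\dots,e^k)$, and I will verify the commutation $s_{a'}(\eta\diamond\sigma(e))=\pm\,(s_a\eta)\diamond\sigma(e)$ by unwinding the definition of $s_a$ (it prepends a value and multiplies by $(-1)^{\sum_{i<a}\|f^{-1}(i)\|}$) and tracking how the exponent and the prepended value are permuted by the block shuffle.

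I expect the main obstacle to be the final reconciliation of signs: one must check that $\xi(\sigma,e)$, the signs $\theta(f,\alpha)$ attached to each $2$-index, the exponents produced by $s_a$ and $s_{a'}$, and the Koszul signs generated by moving $\sigma(e)$ past the graded factors all combine to the single factor $(-1)^{\xi}$. This is exactly the kind of bookkeeping the author elsewhere calls ``tiresome but straightforward''; the conceptual content lies entirely in the index bijection and the $s_a$-commutation, and once those are in place the signs are forced by the graded-commutativity (Koszul) convention encoded in $\xi$.
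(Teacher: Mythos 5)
Your proposal is correct and takes essentially the same route as the paper: the paper's entire proof of this lemma is the single remark that it ``follows from straightforward induction,'' i.e.\ induction on the recursion $C=s_a(X_1+X_2)$, which is precisely what you carry out. Your coarsened lexicographic order on $(k,m,e^1+\cdots+e^k)$ is a harmless (still well-founded) variant of the paper's order, and your matching of $X_1$ via $d(f\diamond\sigma)=(df)\diamond\sigma$, the index bijection for $X_2$, and the $s_{a}$/$s_{a'}$-commutation under the block permutation supply exactly the details the paper leaves implicit.
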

\begin{proof}
This follows from straightforward induction.
\end{proof}
\begin{thm}\label{thmchain}
For each $k\geq 1$, $\Phi_k$ defines a chain map $\Phi_k:\E(k)\otimes_{\Sigma_k}BA^{\otimes k}\longrightarrow BA$.
\end{thm}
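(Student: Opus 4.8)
The plan is to establish two facts: that $\Phi_k$ descends to the coinvariants $\E(k)\otimes_{\Sigma_k}BA^{\otimes k}$, and that the resulting map commutes with the total differential. I treat the $\Sigma_k$-invariance first, as it is the shorter half and is essentially a repackaging of Lemma \ref{lemequivariance}.

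For the invariance I must verify, for every $\sigma\in\Sigma_k$,
\[
\Phi_k(f\diamond\sigma,\mathbf{x}^1,\dots,\mathbf{x}^k)=\Phi_k\bigl(f,\sigma\cdot(\mathbf{x}^1,\dots,\mathbf{x}^k)\bigr),
\]
where the right-hand side carries the Koszul sign produced by permuting the tensor factors $\mathbf{x}^i$. I would first note that $\sigma$ induces a bijection $A_l(f\diamond\sigma;p^1,\dots,p^k)\cong A_l(f;p^{\sigma^{-1}(1)},\dots,p^{\sigma^{-1}(k)})$, obtained by relabelling the blocks $\ol{p^i}$ of each elementary decomposition while carrying the valuewise overlapping partition $(S_1,\dots,S_l)$ along unchanged. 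Under this bijection the summands on the two sides of the displayed identity correspond slot by slot, and substituting Lemma \ref{lemequivariance} for each coefficient element reduces the claim to checking that the block-permutation sign $\sigma(e^1,\dots,e^k)$, the exponent $\xi$, and the rearrangement of the $\kappa$-sign together reproduce the Koszul sign on the right. This is a finite sign computation of the same flavour as the induction proving Lemma \ref{lemequivariance}.

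For the chain map property I would expand $d\Phi_k(f,\mathbf{x})$ using $d=d_0+d_1$ on $BA$ and compare it with $\Phi_k(df,\mathbf{x})+\sum_i\pm\,\Phi_k(f,\dots,d\mathbf{x}^i,\dots)$, organising the terms by the mechanism that produces them. Applying $d_0$ to a bar slot $C(f_{S_j};E_j)(E_j\mathbf{x})$ and using that $A$ is an $\bar\E$-algebra splits it into $(dC(f_{S_j};E_j))(E_j\mathbf{x})$ plus the terms in which the internal differential hits a variable; Lemma \ref{lemchain}(1) rewrites the first piece as $(X_1+X_2+X_3)(f_{S_j};E_j)$ applied to $E_j\mathbf{x}$. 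I would then match four families. The variable-differentiated terms reproduce $\Phi_k(f,d_0\mathbf{x})$. The $X_1=C(d f_{S_j};E_j)$ terms assemble, through the correspondence between indices of $f$ and indices of the terms $d_qf$ appearing in $df$, into $\Phi_k(df,\mathbf{x})$. The $X_3$ terms insert a $(12)$ into a single coefficient element and hence multiply two consecutive variables lying in the same $\mathbf{x}^i$, so they reproduce $\Phi_k(f,d_1\mathbf{x})$. Finally the $X_2$ terms, each a product $C\cdot C$ confined to one bar slot, must cancel the $d_1\Phi_k$ terms that fuse two adjacent bar slots; the relevant bijection sends an $l$-index $\alpha$ together with a chosen adjacent pair $(j,j+1)$ to the $(l-1)$-index got by amalgamating $E_j,E_{j+1}$ and $S_j,S_{j+1}$, equipped with the $2$-index of the fused piece recording the split, which is exactly the correspondence $F$, $F'$, $G$ used in the $X_1\leftrightarrow X_2$ analysis of Lemma \ref{lemchain}.

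The main obstacle I anticipate is sign bookkeeping rather than the combinatorial architecture, which is dictated by Lemma \ref{lemchain}(1). The delicate point is confirming that the $X_2$ and $d_1\Phi_k$ families cancel with a genuine opposite sign rather than reinforcing: this requires weighing the $\kappa$-sign of $\Phi_k$ against the $\theta$-sign carried by the two-index and the Koszul sign of the bar differential $d_1$ on $BA$. This is precisely the \emph{tiresome but straightforward} verification deferred to the reader in Lemma \ref{lemchain}, now performed one level up in the bar complex. Once the signs are checked, the two sides agree term by term and both assertions of the theorem follow.
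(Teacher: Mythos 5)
Your overall architecture coincides with the paper's: expand $d\Phi_k(f,\mathbf{x})$, feed each slot's $dC(f_{S_j};E_j)$ through Lemma \ref{lemchain}(1), and match exactly the four families you list (internal differentials with $\Phi_k(f,d_0\mathbf{x})$, the $X_3$ terms with $\Phi_k(f,d_1\mathbf{x})$, the $X_1$ terms with $\Phi_k(df,\mathbf{x})$, the $X_2$ terms with the $d_1$-fusions of adjacent bar slots), with the passage to coinvariants dispatched by Lemma \ref{lemequivariance} just as in the paper. But you have located the delicate step in the wrong family, and as written the $X_1$ step would fail. The $X_2$-versus-$d_1\Phi_k$ matching is the easy one: amalgamating $(E_j,E_{j+1};S_j,S_{j+1})$ into a single piece and recording the split as a $2$-index of the fused slot is an honest bijection (each $E_j\cap\ol{p^i}$ is an interval of consecutive integers, and refining one piece of an overlapping partition by an overlapping partition yields an overlapping partition), so no cancellation machinery is needed there; your identification of this bijection with the correspondences $F$, $F'$, $G$ is a misattribution, though harmless, since the amalgamation/splitting bijection you describe is the right one.

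By contrast, the $X_1$-versus-$\Phi_k(df,\mathbf{x})$ matching, which you dismiss as assembling ``through the correspondence between indices of $f$ and indices of $d_qf$,'' is exactly what the paper singles out as the only non-trivial part, and it is \emph{not} a bijection. The slotwise $X_1$ terms are indexed by triples $((E_j,S_j),j_0,q)$ with $q\in S_{j_0}$ and $|f^{-1}(f(q))\cap S_{j_0}|\geq 2$; when $q$ is an overlapping point also belonging to an adjacent piece $S_{j'}$, deleting $q$ from $S_{j_0}$ alone does not yield a valid index for $d_qf$ (the sets must live in $\ol{m}-\{q\}$, yet $q$ persists in $S_{j'}$), so such triples have no partner among the indices of $\Phi_k(df,\mathbf{x})$ and must instead cancel among themselves. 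This is the paper's decomposition $T=T_1\sqcup T_2\sqcup T_3$: only $T_1$ (where $q$ lies in no other piece) corresponds one-to-one with the indices $(q,\alpha')$ of the former sum---and even building that inverse requires the case analysis behind $F'$, deciding on which side of the split a given $q$ is reinstated---while $T_2$ and $T_3$ cancel each other via the analogue of $G$, shifting the overlap point to the previous element of $f^{-1}(f(q))$. Your proposal contains all the needed tools; the repair is to transfer the $F$, $F'$, $G$ cancellation argument from your $X_2$ paragraph to your $X_1$ paragraph, after which the plan agrees with the paper's proof (modulo the sign bookkeeping that both you and the paper defer).
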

\begin{proof}
In the following, we omit signs for simplicity. Verification of signs is left to the reader. We shall show $\Phi_k$ defines a chain map $\E(k)\otimes_\base BA^{\otimes k}\longrightarrow BA$. We first check the terms of length 1. the terms of length 1 in $d(f(\mbf{x}^1,\dots,\mbf{x}^k))$ is
\[
\begin{split}
dC(f;p^1,\dots,p^k)(x_1^1,\dots, x_{p^k}^k)&+\sum_{\alpha}C(f_{S_1};E_1)(E_1\mbf{x})\cdot C(f_{S_2};E_2)(E_2\mbf{x})\\
&+
\sum_{i,t}C(f;p^1,\dots,p^k)(\dots, dx^i_t,\dots )
\end{split}
\]
On the other hand, the terms of length 1 in $(df)(\mbf{x}^1,\dots, \mbf{x}^k)+fd(\mbf{x^1},\dots,\mbf{x^k})$
is
\[
C(df;p^1,\dots, p^k)(x^1_1,\dots,x^k_{p^k})+\sum_{i,t}C(f;\dots ,p^i-1,\dots)(\dots,x^i_tx^i_{t+1},\dots )+\sum_{i,t} C(f;p^1,\dots, p^k)(\dots, dx^i_t,\dots)
\]
 By Lem.\ref{lemchain}, these terms are equal. Equality of higher length terms essentially follows form equality of length 1. The only non-trivial part is equality of the following two sums.
\[
\begin{split}
&\sum_q\sum_{\alpha\in A_l(d_qf;p^1,\dots, p^k)}[C((d_qf)_{S_1};E_1)(E_1\mbf{x})|\dots |C((d_qf)_{S_q};E_q)(E_q\mbf{x})], \\
&\sum_{\alpha\in A_l(f;p^1,\dots, p^k)}\sum_{j_0}\sum_{q}
[C(f_{S_1};E_1)(E_1\mbf{x})|\dots| C(d_q(f_{S_{j_0}});E_{j_0})(E_{j_0}\mbf{x})|\dots |C(f_{S_l};E_l)(E_l\mbf{x})]
\end{split}
\]
This follows from a consideration similar to the proof of Lem.\ref{lemchain}.  Precisely speaking, the latter sum is indexed by the set $T$ as follows.
\[
T=\{ ((E_{j},S_{j}),j_0,q)|\ (E_{j},S_{j})\in A_l(f;p^1,\dots, p^k),\ 1\leq j_0\leq l,\  q\in S_{j_0}, \ |f^{-1}(f(q))\cap S_{j_0}|\geq 2\}
\] 
$T$ is decomposed to disjoint union of three subsets $T_1$, $T_2$, $T_3$. Here,
\[
\begin{split}
T_1=&\{ ((E_{j},S_{j}),j_0,q)\in T|\forall j'\not= j_0, \ q\not\in S_{j'}\} 
\\
T_2=&\{ ((E_{j},S_{j}),j_0,q)\in T|\exists j'\not= j_0, \ q\in S_{j'}\text{ and }\forall j'<j_0,\  q\not \in S_{j'}\}\\
T_3=&\{ ((E_{j},S_{j}),j_0,q)\in T|\exists j'< j_0,\  q\in S_{j'}\}
\end{split}
\]
We can see elements of $T_1$ are in one to one correspondence with indices of the former sum and terms indexed by $T_2$ and $T_3$ cancel each other. \\
\indent The claim that the chain map factors through the coinvariant follows easily from Lem.\ref{lemequivariance}. 
\end{proof}


\subsection{Properties of $\Phi_k$}

Recall the sequence operad has a filtration $\{\E_n\}_{n\geq 0}$,where $\E_n$ is weak equivalent to the chain operad of little $n$-cubes and $\E_n$ is spanned by sequences of complexity $\leq n$ (\cite{mccluresmith}).
\begin{prop}\label{propcomparison}
\textup{(1)} If $\base$ is a field of characteristic 2, Kadeishvili's $\cup_i$-product on the bar complex \cite{kadeishvili} is equal to evaluation of $\Phi_2$ at $\tau_i:=(1212\dots)\in \E(2)_i$. In other words, $C(\tau_i;p,q)=E^{i}_{p,q}$ in the notation of \cite{kadeishvili}\\
\textup{(2)} The restriction of $\Phi_k$ to $\E_n$ is determined by the action of $\E_{n+1}$ on $A$. More precisely, for a non-degenerate sequence $f$, any coefficient element $C(f;e^1,\dots,e^k)$ of $f$ is a linear combination of non-degenerate sequences $g$ which satisfy the following conditions:
\begin{enumerate}
\item $g|_{g^{-1}(\overline{e^i})}$ is order-preserving
\item For $i_1\not= i_2$, the complexity of $g|_{g^{-1}(\overline{e^{i_1}}\sqcup\overline{e^{i_2}})}$ is smaller than or equal to the complexity of $f|_{f^{-1}(\{i_1,i_2\})}$ plus one.
\end{enumerate}
Here, we use the identification $\overline{e^1}\sqcup\dots\sqcup\overline{e^k}=\overline{e^1+\cdots +e^k};\  \overline{e^i}\ni r\mapsto \sum_{i'<i}e^{i'}+r$.
\end{prop}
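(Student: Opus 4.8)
Both assertions concern the coefficient elements $C(f;e^1,\dots,e^k)$, which are built by the single recursion $C(f;e^1,\dots,e^k)=s_a(X_1+X_2)$ of Section 2.2. Accordingly, the plan for each part is an induction along the same lexicographic order $(k,m,e^1,\dots,e^k)$ that governs the definition, reducing each claimed property to the corresponding property of the constituents $X_1=C(df;\dots)$ and $X_2$, and then tracking how $s_a$ modifies it.

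For part (1) I would first record the inductive presentation of Kadeishvili's elements $E^i_{p,q}$ from \cite{kadeishvili} and specialize the recursion for $C$ to $\base$ of characteristic $2$, where all signs disappear and $C(\tau_i;p,q)=s_a(X_1+X_2)$ with $X_1=C(d\tau_i;p,q)$. Since $d\tau_i$ is, up to degenerate terms, the sum of $\tau_{i-1}$ and its opposite, $X_1$ reproduces the ``boundary'' part of Kadeishvili's recursion, while $X_2$, the sum over $2$-indices of products $C((\tau_i)_{S_1};E_1)\cdot C((\tau_i)_{S_2};E_2)$, reproduces its ``product'' part. The base case is $\tau_0=(12)$, for which the Example computes $C((12);1,q)$ and $C((12);p,q)=0$ for $p>1$; this is exactly the Baues cup product on the bar complex, i.e. $E^0_{p,q}$. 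I would then run a double induction (on $i$, and for fixed $i$ on $p+q$) and match the two recursions term by term. The only real labor is the bookkeeping that identifies the $2$-indices of $(\tau_i;p,q)$ with the summands in Kadeishvili's formula; once the indexing sets are matched the equality of terms is immediate in characteristic $2$.

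For part (2) I would first observe that conditions (1) and (2) imply $g\in\E_{n+1}$ whenever $f\in\E_n$: the complexity of a sequence is the maximum of its alternation counts over pairs of values, and by (1) the pairs lying in a single block $\overline{e^i}$ contribute at most $1$, while by (2) the complexity on any two blocks is at most the complexity of $f$ on the two corresponding values plus one, hence at most $n+1$; thus $g$ has complexity $\le n+1$ and the action of $C(f;\dots)$ on $A$ factors through $\E_{n+1}$. It therefore suffices to prove (1) and (2) by induction on the lexicographic order, using the already-established Lemma \ref{lemchain}(2). Condition (1) is the more transparent: for $X_1=C(df;\dots)$ it is immediate from the inductive hypothesis; for each summand of $X_2$ it follows because the product $\cdot$ is the concatenation of two block-wise order-preserving sequences and the shuffle $\sigma_\alpha$ places the $E_1$-part before the $E_2$-part within every block (the defining property of an elementary decomposition), so the concatenation stays block-wise monotone; finally $s_a$ prepends the smallest value $a$ of the block $f(1)$, which preserves monotonicity of that block.

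The main obstacle is condition (2), the complexity estimate. Here I would analyze separately the effect of the operadic product and of the homotopy $s_a$ on the alternation count between two fixed blocks $i_1\ne i_2$. For the product $C(f_{S_1};E_1)\cdot C(f_{S_2};E_2)$, which is the plain concatenation $(12)(f_{S_1},f_{S_2})$, the number of $i_1$--$i_2$ alternations is at most the sum of those in the two factors plus one junction alternation; the decisive input is that $(S_1,S_2)$ is a valuewise overlapping partition, so the alternation counts of $f_{S_1}$ and $f_{S_2}$ on $\{i_1,i_2\}$ add up to that of $f$, and the overlap is what keeps the junction within the single allotted $+1$. The subtle point, which I expect to demand the most care, is to show that applying $s_a$ does not spend a second unit of the budget: prepending $a$ can a priori create one new alternation in the block containing $a$, so one must use the precise form of the sequences produced at the previous stage --- in particular the structural constraints of Lemma \ref{lemchain}(2) and the fact that $a$ is the \emph{first} value of its block --- to see that the complexity after $s_a$ still obeys the $+1$ bound rather than a $+2$ bound. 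Verifying this interaction uniformly across the cases $f(1)\in\{i_1,i_2\}$ and $f(1)\notin\{i_1,i_2\}$ is the heart of the argument.
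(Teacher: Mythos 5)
Your proposal is correct and takes essentially the same approach as the paper: the paper's entire proof of this proposition is the single line ``This follows from straightforward induction,'' and your induction along the lexicographic order governing the defining recursion $C(f;e^1,\dots,e^k)=s_a(X_1+X_2)$, reducing each condition to the corresponding property of $X_1$, $X_2$ and tracking the effect of $s_a$, is precisely that induction carried out in detail. One simplification you may note for the complexity estimate in part (2): after applying $\sigma_\alpha$ the two concatenated segments use disjoint sets of values (namely $\mu(E_1)$ and $\mu(E_2)$), so any pair of values is either confined to a single segment, where the inductive bound applies, or split between the two, where the pair-subsequence has exactly one alternation --- this makes the ``junction'' count cleaner than a sum-plus-one estimate, leaving the $s_a$ step (where your observation that every sequence produced at the previous stage begins with the first value of its block does the work) as the only delicate point, exactly as you identified.
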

\begin{proof}
This follows from straightforward induction.
\end{proof}
\begin{rem}
If one considers an algebra $A$ over the chain operad of little cubes, the method of \cite{vogt} shows the $E_n$-structure of the bar complex $BA$ defined in \cite{fresse1} is determined by the $E_{n+1}$-structure of $A$.
\end{rem}
$\Phi_k$ does not define an operad action on $BA$, i.e., it does not satisfy the associativity low. For example, $(121)((12)([x],[y]),[z])\not= \{(12131)+(13121)\}([x],[y],[z])$. \\
\indent We shall show some partial associativity of $\Phi_k$. Let $C_q:=C((12);1,q)$.
\begin{prop}\label{propasso}
\textup{(1)} Let $f:\bar{k}\to \bar{k}$ and $g:\bar{n}\to \bar{r}$ be two non-degenerate sequences. Let $p^1,\dots, p^k$ and $q^1,\dots, q^r$ be positive integers. Then,
\[
\begin{split}
C(f\cdot g, p^1,&\dots, p^k, q^1,\dots q^r)\\
&=
\sum_{l\geq 1}\sum_{\alpha\in A_l(g;q^1,\dots ,q^r)}(-1)^{\varpi (g,p,\alpha) }C_l(C(f;p^1,\dots p^k),C(g_{S_1};E_1),\dots ,C(g_{S_l};E_l))\diamond  \hat\sigma_\alpha
\end{split}
\]
where 
\[
\begin{split}
\varpi (g,p,\alpha)=\sum_{i>i', j<j'}||g_{S_j}^{-1}(i)||\cdot &||g_{S_{j'}}^{-1}(i')||+\sum_{i\geq i', j>j'}e^i(E_j)\cdot e^{i'}(E_{j'})\\
&+\sum_j\big\{(l-j)e(E_j)+|g_{S_j}|\cdot (j+\sum_{j'<j}e(E_{j'})) \big\}
\end{split}
\]
and $\hat \sigma_\alpha=id\times \sigma_\alpha$ ($id\in \Sigma_{p^1+\cdots +p^k}$).\\
\textup{(2)} If $|f|=0$, then
$(f\cdot g)(\mbf{x}^1,\dots, \mbf{x}^{k+r})=(-1)^{|g|(|\mbf{x}^1|+\cdots+|\mbf{x}^k|)}f(\mbf{x}^1,\dots,\mbf{x}^k)\cdot g(\mbf{x}^{k+1},\dots,\mbf{x}^{k+r})$. In particular, the restriction of $\Phi_k$ to the associative operad $\Phi_k:\A(k)\otimes_{\Sigma_k}BA^{\otimes k}\longrightarrow BA$ defines an action of the operad $\ol{\A}$.
\end{prop}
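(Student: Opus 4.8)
For part (1) I write $\Psi(f,g)$ for the right-hand side of the displayed formula, so that the goal is $C(f\cdot g)=\Psi(f,g)$, and I prove this by induction on the lexicographic order on $(k+r,\,m+n,\,p^1,\dots,p^k,q^1,\dots,q^r)$ used to define coefficient elements. Two observations set up the computation. First, since $f\cdot g=(12)(f,g)$ is the concatenation of $f$ with the values of $g$ shifted by $k$, we have $(f\cdot g)(1)=f(1)$, so the integer $a=\sum_{i<f(1)}p^i+1$ governing the outermost $s_a$ in the recursion for $C(f\cdot g)$ is exactly the $a$ used to build $C(f;p^1,\dots,p^k)$. Second, because $d(12)=0$, the Leibniz rule for operad composition gives $d(f\cdot g)=df\cdot g+(-1)^{|f|}f\cdot dg$, whence $X_1(f\cdot g)=C(df\cdot g)+(-1)^{|f|}C(f\cdot dg)$, and the inductive hypothesis applies to each summand since the middle entry $m+n$ drops.

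The heart of (1) is the analysis of $X_2(f\cdot g)$. Because the values $1,\dots,k$ occur only in the $f$-part and the values $k+1,\dots,k+r$ only in the $g$-part, the valuewise-overlapping-partition condition decouples: a $2$-index of $f\cdot g$ is pinned down by how its cut splits the $f$-part and the $g$-part independently, together with a compatible splitting of $\ol{p^1}\sqcup\cdots\sqcup\ol{q^r}$ by $(E_1,E_2)$. I would classify the $2$-indices accordingly and match $s_a(X_1(f\cdot g)+X_2(f\cdot g))$ against $\Psi(f,g)$ term by term. The key point is that the contributions in which the cut lies inside the $f$-part, assembled with the term $C(df\cdot g)$ from $X_1$, reconstitute $f$'s own recursion $C(f)=s_a(C(df)+X_2(f))$ \emph{inside} the composition with $C_l$; this is exactly why $f$ never appears split but only as the intact block $C(f;p^1,\dots,p^k)$. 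The decomposition of the $g$-part over all $l$-indices $\alpha\in A_l(g;q^1,\dots,q^r)$ and the appearance of $C_l=C((12);1,l)$ as the outer operation both fall out of iterating the outer $s_a$, which places a new front element at each stage; this is the mechanism by which $C_l$ threads the single $f$-block through the $l$ pieces of $g$.

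I expect the main obstacle to be entirely bookkeeping of two kinds: (i) checking that the correspondence between $2$-indices of $f\cdot g$ and pairs (an $l$-index of $g$, an internal $2$-index of $f$) is exhaustive, and (ii) verifying that the accumulated signs from $s_a$, from $\theta$ in the definition of $X_2$, and from the equivariance permutation $\sigma_\alpha$ collapse to the single exponent $\varpi(g,p,\alpha)$. This is the ``straightforward but tiresome'' verification deferred elsewhere in the paper; the conceptual content lies purely in the decoupling of the two value-ranges and in recognizing $f$'s recursion inside the composite.

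For (2), I first note that the product on the right-hand side is the $(12)$-product $\mbf{x}\cdot\mbf{y}:=\Phi_2((12),\mbf{x},\mbf{y})$ on $BA$, consistent with $(12)([x],[y])=[x|y]+[y|x]+[(121)(x,y)]$. When $|f|=0$ the sequence $f$ is a permutation, and using equivariance (Lem.\ref{lemequivariance}) I reduce to $f=\mathrm{id}=(1\,2\cdots k)$. For this $f$ every value of the $f$-part occurs exactly once, so in any $l$-index of $f\cdot g$ each of the positions $1,\dots,k$ lies in a unique $S_j$ and is never an overlap point; the sum defining $\Phi(f\cdot g,\mbf{x})$ then reorganizes into the $l$-indices one meets when forming $\Phi_2((12),\,f(\mbf{x}^1,\dots,\mbf{x}^k),\,g(\mbf{x}^{k+1},\dots,\mbf{x}^{k+r}))$, and the Koszul sign from moving $g$ past the first $k$ arguments yields the prefactor $(-1)^{|g|(|\mbf{x}^1|+\cdots+|\mbf{x}^k|)}$. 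The ``in particular'' statement is then formal: taking $f=(12),g=(1)$ and $f=(1),g=(12)$ in (2) both give $(1\,2\,3)(\mbf{x},\mbf{y},\mbf{z})$ with trivial sign (since $|g|=0$), so $(\mbf{x}\cdot\mbf{y})\cdot\mbf{z}=\mbf{x}\cdot(\mbf{y}\cdot\mbf{z})$; associativity of the $(12)$-product together with equivariance identifies $\Phi_k|_{\A(k)}$ with the action of the operad $\ol{\A}$ on the algebra $(BA,\cdot)$.
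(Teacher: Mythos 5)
Your overall strategy for part (1) does coincide in outline with the paper's: induction on a lexicographic order, unfolding $C(f\cdot g)=s_a(X_1+X_2)$, the observation that the two value ranges decouple, and the recognition that $f$ must survive as an intact block because its own recursion gets reconstituted. Your part (2) is also acceptable: the paper merely says ``(2) follows from (1),'' and your direct reduction to $f=\mathrm{id}$ via Lem.\ref{lemequivariance} together with the formal derivation of associativity from the two specializations $(f,g)=((12),(1))$ and $((1),(12))$ is a legitimate (if sketchy) route. But in (1) you defer precisely the step that constitutes the proof. The right-hand side is not yet a sum of sequences that one can match ``term by term'': in the paper, $C_l(C(f;p^1,\dots,p^k),C(g_{S_1};E_1),\dots,C(g_{S_l};E_l))$ is first expanded through the McClure--Smith composition diagrams $D$ of type $((121\dots l{+}1\,1),d_1,\dots,d_l)$, and $C(f)$ is expanded over its own $2$-indices $\beta\in A_2(f)$, so the RHS becomes a sum indexed by quadruples $(l,\alpha,\beta,D)$. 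On the LHS, the $2$-indices $\gamma\in A_2(f\cdot g)$ are classified into five cases according to how $U_1,U_2$ meet $\bar{k}$ and $\bar{r}$ (using Lem.\ref{lemchain}(2) to see that nonzero terms force the first element of $\bar{k}$ into $U_2$); the inductive hypothesis is applied to the two mixed blocks $(f\cdot g)_{U_1}$ and $(f\cdot g)_{U_2}$, each again a concatenation $f'\cdot g'$ with strictly smaller $(k,r,n)$, and the results are recombined via the identity $s_1\{(h(l_1,\alpha_1,D_1)\hat\sigma_{\alpha_1})\cdot(h(l_2,\alpha_2,D_2)\hat\sigma_{\alpha_2})\}\sigma_\gamma=h(l,\alpha,\beta,D)\hat\sigma_\alpha$. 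The five cases then translate into explicit constraint classes on $a_D$, and the proof is completed by checking these classes exhaust all quadruples with no overlap. Your ``I would classify the $2$-indices accordingly and match\dots{} I expect the main obstacle to be entirely bookkeeping'' labels exactly this as routine; without the diagram calculus there is no well-defined notion of a term on the RHS at all, and the exhaustiveness/disjointness of the classification is the substance of the proposition, not an afterthought.

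Two of your mechanism statements are also off in ways that would derail the execution. First, since $f:\bar{k}\to\bar{k}$ is non-degenerate it is a bijection, so $|f|=0$ and $df=0$: your term $C(df\cdot g)$ is vacuous, and the recursion of $f$ that gets reconstituted is purely $C(f)=s_a X_2(f)$; the reconstitution happens by expanding $C(f)$ over $\beta$ on the RHS and matching it against the cases where both $U_1$ and $U_2$ meet $\bar{k}$, not by ``assembling the cut inside the $f$-part with the $X_1$ term.'' Second, you invoke the inductive hypothesis only for the $X_1$ summand (``the middle entry $m+n$ drops''), whereas the engine of the induction is its application to the two $X_2$ factors $(f\cdot g)_{U_s}$; correspondingly, the $l$ pieces of $g$ and the outer $C_l$ arise from recombining those two inductive expansions, not from ``iterating the outer $s_a$'' --- a single unfolding of the recursion produces only two blocks, and no further $s_a$ is iterated on the LHS. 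As it stands, the proposal is a correct high-level plan with the pivotal index correspondence and all signs (the exponent $\varpi$) left unverified, so it does not yet establish the proposition.
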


\begin{proof}
(1)We use induction. When $f=g=(1)$, the claim is trivial. We assume the claim holds for $f':\bar{k'}\to \bar {k'}$ and $g':\bar{n'}\to \bar{r'}$ such that $(k',r',n')< (k,r,n)$ in the lexicographical order.By Thm.\ref{thmchain}, we may assume $f(1)=1$.\\
\indent According to the definition of the composition multiplication, we have
\[
C_l(C(f;p^1,\dots, p^k),C(g_{S_1};E_1),\dots, C(g_{S_l};E_l))\diamond \hat\sigma_\alpha =\sum _Dh(l,\alpha, D)\diamond\hat\sigma_\alpha.
\]
Here, $h(l,\alpha, D)$ is a linear combination of sequences of the form $h_D$ produced from sequences appearing in $C(f;p^1,\dots, p^k),C(g_{S_1};E_1),\dots, C(g_{S_l};E_l)$, by using the diagram $D$ of type $((121\dots l+1,1),d_1,\dots,d_l)$ as follows.
\[
\xymatrix{\overline{2\Sigma_ip^i -1+\Sigma_{j=1}^ld_j+l}\ar[r]^{b_D}\ar[d]^{a_D}&\overline{2\Sigma_ip^i -1}\sqcup \overline{d_1}\sqcup \cdots \sqcup \overline{d_l}\ar[d]\\
\overline{2l+1}\ar[r]^{(121\dots l+11)}&\overline{l+1}.}
\]
Here, $d_j$ is the number of the entries of $C(g_{S_j};E_j)$, $d_j=2e(E_j)+|S_j|-|g(S_j)|-1$ (see \cite[Definition 2.23-2.25, Proposition 2.26]{mccluresmith}).\\
According to the expansion $C(f;p^1,\dots, p^k)=\sum_{\beta=(T_j,F_j)\in A_2(f)}s_1\{C(f_{T_1};F_1)C(f_{T_2};F_2))\sigma_\beta\}$, \\
  $h(l,\alpha, D)$ is expanded as
\[
h(l,\alpha, D)=\sum_\beta h(l,\alpha,\beta, D).
\]
Thus, the right hand side of the equation of the lemma is the sum indexed by the set $\{(l,\alpha, \beta, D)\}$.\\
\indent On the other hand, the left hand side is equal to 
\[
\sum_{\gamma =(G_j,U_j)\in A_2(f\cdot g)
}
s_1(C((f\cdot g)_{U_1};G_1)C((f\cdot g)_{U_2};G_2)\sigma_\gamma).
\]
Here $\gamma$ runs through all 2-indices. We regard $\bar k$ (resp. $\bar r$) as a subset of $\overline{k+r}$ by identifying it with the set of former $k$ elements (resp. latter $r$ elements). Note that if $\gamma$ corresponds to non-zero term, the first element of $\bar k$ is contained in $U_2$. We have five cases:\\
(i) $U_1\subset \bar k$, \\
(ii) $U_1\subset \bar r$ and $U_2\not\subset \bar k$, \\
(iii) $U_1=\bar r$, and $U_2=\bar k$, \\
(iv) $U_1\not\subset \bar k$, $\bar r$, and $U_2\not\subset \bar k$ \\
(v) $U_1\not\subset \bar k$, $\bar r$ and $U_2\subset \bar k$.\\
\indent We shall consider the case (iv). Using inductive hypothesis, we have
\[
s_1(C((f\cdot g)_{U_1};G_1)C((f\cdot g)_{U_2};G_2)\sigma_\gamma)=
\sum_{(l_1,\alpha_1)}\sum_{(l_2,\alpha_2)}s_1\{(h(l_1,\alpha_1,D_1)\hat\sigma_{\alpha_1})\cdot (h(l_2,\alpha _2,D_2)\hat\sigma_{\alpha _2})\}\sigma_\gamma \\
\]
We put $l=l_1+l_2$, and denote by $\alpha$ the $l$-index made by connecting $\alpha_1$ and $\alpha_2$, and put  $\beta=(G_1\cap\mbf{p},G_2\cap\mbf{p};U_1\cap\bar{k},U_2\cap\bar{k})$, where $\mbf{p}=\ol{p^1}\sqcup\dots \sqcup \ol{p^k}$. We denote by $D$ the diagram made by connecting $D_1$ and $D_2$.
Then, 
$s_1\{(h(l_1,\alpha_1,D_1)\hat\sigma_{\alpha_1})\cdot (h(l_2,\alpha _2,D_2)\hat\sigma_{\alpha _2})\}\sigma_\gamma =h(l,\alpha,\beta,D)\hat\sigma_\alpha$. If $\gamma$ runs through the range of (iv), and $l_1, l_2, \alpha_1, \alpha_2$ varies, the index $(l,\alpha,\beta,D)$ runs through the range of 
\[
\begin{split}
l\geq 2,\ \  \alpha, \ &\beta \text{ :arbitrary, } \\ D:\ {a_D}^{-1}&(1)\supset \{1,2\}, \ \  \exists l'<l \ \  a_D^{-1}(2l'+1)\supset \big\{2e(F_1)+\sum_{j\leq l'}d_j,\  2e(F_2)+\sum_{j\leq l'}d_j+1\big\}.
\end{split}
\] 
Similar consideration shows other cases corresponds to the set of $(l,\alpha,\beta, D)$'s satisfying : \\
(i) $l\geq 1$ and $a_D^{-1}(1)\supset \{1,\dots, 2\Sigma_ie^i(F_1)+1\}$.\\
(ii) $l\geq 2$ and $a_D^{-1}(1)=\{1\}$.\\
(iii) $l\geq 1$ and $a_D^{-1}(1)=\{1\}$.\\
(v) $l\geq 1$ and $a_D^{-1}(2l+1)\supset \{2\Sigma_ie^i(F_1)+\sum_jd_j, 2\Sigma_ie^i(F_1)+\sum_jd_j+1\}$.\qquad
(In any case , $\alpha$ and $\beta$ are arbitrary.)\\
\indent (i)-(v) cover all indices and have no overlap. Thus equality of the lemma holds.\\
\indent (2) follows from (1).
\end{proof}


\section{Steenrod operation on the bar complex}\label{steenrod}
Let $p$ be a prime number. In this section, we assume $\base=\Z/p$. Let $\Pi_p$ be the subgroup of $\Sigma_p$ consisting of all cyclic permutations. 
Let $W$ be the $\Pi_p$-projective resolution of $\Z/p$ defined in \cite[Definition 1.2]{may}.
Fix a $\Pi_p$-equivariant chain map
\[
\varphi :W\longrightarrow \E(p)
\]
which takes the fixed generator $e_0$ to $id\in \E(p)_0$. One can easily construct such a map using the contracting chain homotopy $H=\sum_{k}(-1)^k(\iota_1)^k\circ s_1\circ (r_1)^k:\E(p)\to \E(p)$ (see Notation and Terminology (2)).\\
\indent If $B$ is a complex with chain maps $\phi:\E(p)\otimes_{\Sigma_p}B^{\otimes p}\longrightarrow B$, 
we define operations $P^s$ and $\beta P^s$ by applying the definition in \cite[Definition 2.2 and section 5]{may} to the composition
\[
W\otimes B^{\otimes p}\longrightarrow \E(p)\otimes B^{\otimes p}\longrightarrow B.
\]
In this way, using $\Phi_p$ defined in the previous section, we define operations $P^s$ and $\beta P^s$ on the cohomology of the bar complex $BA$ of $\bar{\E}$-algebra $A$.\\
\indent We do not prove that $H^*(BA)$ is a module over the generalized Steenrod algebra with the above operations for general $\bar{\E}$-algebra $A$.  We prove only the following. (See Notation and Terminology (3).) 
 \begin{thm}
Let $X$ be a simply connected pointed simplicial set of finite type. Then there exists a natural isomorphism which commutes with product and the operations $P^s$ and $\beta P^s$.
\[
H^*(B\bar{N}^*(X,\Z /p))\cong \bar{H}^*(\Omega X, \Z /p)
\] 
\end{thm}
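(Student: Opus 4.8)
The plan is to separate the additive--multiplicative content of the theorem, which is classical, from the compatibility with $P^s$ and $\beta P^s$, which I would obtain by a homotopy--invariance argument in the spirit of Fresse \cite{fresse1}. First I would establish the underlying isomorphism of graded algebras. For $X$ simply connected of finite type the classical Eilenberg--Moore comparison, realized on cochains as in \cite{smith} and \cite{fresse1}, furnishes a natural quasi-isomorphism (or a natural zig-zag of such)
\[
\omega:B\bar{N}^*(X,\Z/p)\longrightarrow \bar{N}^*(\Omega X,\Z/p)
\]
inducing $H^*(B\bar{N}^*(X,\Z/p))\cong\bar{H}^*(\Omega X,\Z/p)$; simple connectivity and finite type guarantee convergence. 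For multiplicativity I would use Prop.\ref{propasso}(2): the product on $H^*(B\bar{N}^*(X,\Z/p))$ is induced by the restriction of $\Phi_2$ to $\A(2)$, i.e.\ the usual bar (shuffle) product, which $\omega$ carries to the cup product on $\bar{N}^*(\Omega X,\Z/p)$ by the classical multiplicativity of the Eilenberg--Moore map. This part is standard and I would only cite it.

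Next I would arrange both families of operations to be fed through the \emph{same} machine. On the loop--space side $P^s,\beta P^s$ on $\bar{H}^*(\Omega X,\Z/p)$ are produced by May's construction \cite{may}, with the fixed $\varphi:W\to\E(p)$, from the McClure--Smith action \cite{mccluresmith} $\E(p)\otimes_{\Sigma_p}\bar{N}^*(\Omega X,\Z/p)^{\otimes p}\to\bar{N}^*(\Omega X,\Z/p)$; on the bar side they are produced by the identical construction with the identical $\varphi$ from $\Phi_p$. Hence the theorem reduces to the assertion that $\omega$ intertwines, up to $\Pi_p$--equivariant chain homotopy, the two composites
\[
W\otimes_{\Pi_p}(-)^{\otimes p}\xrightarrow{\varphi\otimes 1}\E(p)\otimes_{\Pi_p}(-)^{\otimes p}\longrightarrow(-),
\]
one built from $\Phi_p$ on $B\bar{N}^*(X,\Z/p)$, the other from the cochain action on $\bar{N}^*(\Omega X,\Z/p)$. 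Granting this homotopy--commutativity, the induced maps on cohomology coincide and the operations correspond under $\omega$.

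I would prove the homotopy--commutativity by localizing to finite complexity. The two composites agree on the generator $e_0$, since $\varphi(e_0)=id\in\A(p)$ and by Prop.\ref{propasso}(2) the $e_0$--part of $\Phi_p$ is the iterated bar product, which $\omega$ sends to the cup product, i.e.\ to the $e_0$--part of the topological action. To propagate the comparison over all of $W$ I would run an obstruction--theoretic extension argument over the free resolution $W$: here $W$ is a complex of free $\Z/p[\Pi_p]$--modules and $\E(p)$ is $\Sigma_p$--free and acyclic, hence $\Pi_p$--free and acyclic, so $\varphi$ is a $\Pi_p$--homotopy equivalence. The decisive input is Prop.\ref{propcomparison}(2): for a fixed cohomological degree a given $P^s$ or $\beta P^s$ uses $\varphi(e_j)$ for only finitely many $j$, hence only the restriction of $\Phi_p$ to a fixed complexity layer $\E_n$, and on that layer $\Phi_p$ is governed by the genuine $\E_{n+1}$--action on $A=\bar{N}^*(X,\Z/p)$. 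On this genuinely operadic part Fresse's comparison \cite{fresse1} applies and matches it with the topological action, giving homotopy--commutativity in the required range.

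The main obstacle is precisely this last step. Because $\Phi_p$ is \emph{not} an operad action, one cannot invoke the usual global homotopy--uniqueness of $E_\infty$--extensions, and agreement on products alone does not pin down the operations (distinct $E_\infty$--structures on a fixed algebra can yield distinct operations). The crux is therefore to confine each fixed operation to a finite complexity filtration, where Prop.\ref{propcomparison}(2) restores enough genuine operadic structure for Fresse's argument to take hold. Once homotopy--commutativity is secured, naturality of $\omega$ in $X$ yields naturality of the isomorphism, while the verification of the explicit chain homotopies and of signs is routine and would be suppressed.
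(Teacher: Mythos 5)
Your reduction is reasonable as far as it goes: the additive and multiplicative parts are classical, and the theorem does come down to intertwining the two $W$-parametrized structure maps, up to $\Pi_p$-equivariant homotopy, along a comparison zig-zag. But the step you yourself flag as the crux is a genuine gap, and the tools you invoke do not close it. Prop.~\ref{propcomparison}(2) says only that the \emph{formula} for the coefficient elements $C(f;e^1,\dots,e^k)$, for $f$ of complexity $\leq n$, involves operations coming from the $\E_{n+1}$-action on $A$; it does not say that $\Phi_p$ restricted to $\E_n$ assembles into any genuine (or even coherently homotopy-operadic) $E_n$-action on $BA$, nor that it agrees up to coherent homotopy with Fresse's $E_\infty$-structure on the bar complex --- which is what the phrase ``Fresse's comparison applies and matches it with the topological action'' would require. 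Likewise the obstruction-theoretic propagation from $e_0$ over $W$ cannot work as stated: what must be compared are two points of (roughly) $\operatorname{Hom}_{\Pi_p}(W,\operatorname{Hom}(B^{\otimes p},B'))$, and the inner Hom-complex is not acyclic, so the acyclicity and freeness of $W$ and $\E(p)$ give no uniqueness; indeed you concede that distinct $E_\infty$-structures inducing the same product can yield distinct operations, which is exactly why agreement on $e_0$ propagates nothing.

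The paper closes this gap by a device your sketch is missing. Since $\Phi_p$ and $\Phi_2$ are subject to no operadic relations, they tautologically define an algebra structure over the \emph{free} operad $\oper$ on the $\Sigma_*$-module $M_p$ with $M_p(p)=\E(p)$ and $M_p(2)=\E(2)_0$; choosing a cofibrant replacement $\oper'\to\oper$ with a section on arity $p$, one defines $P^s$ and $\beta P^s$ for all $\oper'$-algebras. Because the coefficient elements are universal in $A$, $\Phi$ lifts to the bar module $B_{\bar{\E}}$, making it an $\oper'$-algebra in right $\bar{\E}$-modules, while $B_{\bar{C}}$ and $C_{\bar{\E}}$ carry $\oper'$-structures pulled back from genuine $\bar{\E}$-actions. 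Fresse's zig-zag $B_{\bar{\E}}\to B_{\bar{C}}\leftarrow C_{\bar{\E}}$, resolved in the model category of $\oper'$-algebras in right $\bar{\E}$-modules and then evaluated via $S_{\bar{\E}}(-,Q(\bar{N}^*(X)))$, produces weak equivalences of $\oper'$-algebras between $B\bar{N}^*(X)$ and $\bar{N}^*(\Omega X)$; naturality of the operations under $\oper'$-algebra quasi-isomorphisms then yields compatibility with the product and with $P^s$, $\beta P^s$ simultaneously. This free-operad encoding is precisely what substitutes for the global homotopy-uniqueness you correctly observe is unavailable, and it constructs the chain-level intertwining your proposal assumes but never produces.
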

\begin{proof}
We first define an operad $\oper$.
Let $M_p$ denotes the $\Sigma_*$-module as follows.
\[
M_p(k)=\left\{
\begin{array}{ll}
\E (p)& \text{ if }k=p \\
\E (2)_0   & \text{ if }k=2\\
0& \text{ otherwise }
\end{array}
\right.
\]
Let $\oper$ be the free (dg-)operad over $M_p$. Note that $\Phi_p$ and $\Phi_2$ defines $\oper$-algebra structure on $BA$, and similarly 
  $\ol{\E}$-algebras have functorial $\oper$-algebra strucures. Let $\mathcal{O}'\to \mathcal{O}$ be a cofibrant replacement of $\mathcal{O}$. By using a fixed section $\mathcal{O}(p)\to \mathcal{O}'(p)$, we can define the Steenrod operations for $\mathcal{O}'$-algbras  \\
\indent The rest of the proof is completely analogous to the proof of topological validity of $E_\infty$-structure in \cite{fresse1}.
For reader's convenience, we sketch the proof. Let $B_{\bar{\E}}$ (resp. $C_{\bar{\E}}$, resp.$B_{\bar{C}}$) denotes bar module of $\bar{\E}$ (resp. categorical bar module of $\bar{\E}$, resp. bar module of the (non-unital) commutative operad $\bar{C}$). These are the (classical or categorical) bar complexes of operads which are considered as algebras over themselves, see \cite{fresse1}. Using $\Phi_k$ defined in the previous section, we give $B_{\bar{\E}}$ a structure of  $\mathcal{O}$-( or $\mathcal{O}'$-)algebra in right $\bar{\E}$-modules. We also consider $B_{\bar{C}}$ and $C_{\bar{\E}}$ as $\mathcal{O}'$-algebras by pulling back the action of $\bar{\E}$. There exist weak equivalences of $\mathcal{O}'$-algebras in right $\bar{\E}$-modules:
\[
B_{\bar{\E}} \longrightarrow B_{\bar{C}} \longleftarrow C_{\bar{\E}}.
\]
From this diagram, using the model category structure on the category of $\mathcal{O}'$-algebras (in right $\bar{\E}$-modules), we obtain weak equivalences:
\[
B_{\bar{\E}}\longleftarrow X\longrightarrow C_{\bar{\E}},
\]
such that $X$ is cofibrant. Then, by homotopy invariance property of the functor $S_{\bar{\E}}(-,-)$ (see 0.2.1 and 0.3.1 of \cite{fresse1}), we have weak equivalences of $\mathcal{O}'$-algebras (in complexes):
\[
\begin{split}
B\bar{N}^*(X)\simeq B(Q(\bar{N}^*(X)))&=S_{\bar{\E}} (B_{\bar{\E}}, Q(\bar{N}^*(X)))\\
&\simeq S_{\bar{\E}} (X, Q(\bar{N}^*(X)))\simeq S_{\bar{\E}} (C_{\bar{\E}}, Q(\bar{N}^*(X)))\simeq \bar{N}^*(\Omega X),
\end{split}
\]
where $Q(-)$ is a cofibrant replacement.
\end{proof}
\section{Diagonal on $\E$}
Let $\E\otimes\E$ be the operad given by $(\E\otimes \E )(k)= \E(k)\otimes_\base \E(k)$. (Composition and action of $\Sigma_k$ are defined simultaneously.) We shall define a morphism $\Delta :\E\longrightarrow \E\otimes \E$ of operads, which we call a diagonal. For a non-degenerate sequence $f:\bar{m}\rightarrow \bar{k}$, we put
\[
\Delta (f)=\sum_{(S_1,S_2)}(-1)^{\delta (f,S_1,S_2)} f_{S_1}\otimes f_{S_2}
\]
Here, $(S_1,S_2)$ runs through valuewise overlapping partition with 2 pieces (see subsection \ref{index} such that $f(S_1)=f(S_2)=\bar{k}$ and $\delta (f,S_1,S_2)=\sum_{i>i'}||f_{S_1}^{-1}(i)||\cdot ||f_{S_2}^{-1}(i')||$.\\
\indent Compatiblity with differentials follows from a consideration similar to the proof of Lem.\ref{lemchain}. It is easy to see that $\Delta$ commutes with composition multiplications.
\begin{prop}
Let $A$, $B$ be two $\E$-algebras. $A\otimes_\base B$ has an $\E$-algebra structure which satisfies the following conditions:
\begin{enumerate}
\item The algebra structure is functorial in $A$ and $B$.
\item The canonical isomorphisms $A\otimes \base\cong A\cong \base\otimes A$ of complexes preserve the algebra structure 
\end{enumerate}
\end{prop}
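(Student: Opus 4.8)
The plan is to use the diagonal $\Delta:\E\to\E\otimes\E$ that has just been constructed to pull back the action of $\E\otimes\E$ on the tensor product. First I would observe that for any two $\E$-algebras $A$ and $B$, the tensor product $A\otimes_\base B$ carries a canonical $(\E\otimes\E)$-algebra structure: the module $(\E\otimes\E)(k)=\E(k)\otimes_\base\E(k)$ acts on $(A\otimes B)^{\otimes k}$ by first shuffling the tensor factors so that the $A$-entries are grouped together and the $B$-entries are grouped together (introducing the usual Koszul sign), then letting the first factor of $\E(k)\otimes\E(k)$ act on $A^{\otimes k}$ via the $\E$-algebra structure of $A$ and the second factor act on $B^{\otimes k}$ via that of $B$. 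This is the standard construction of the tensor product of algebras over a Hopf-type operad, and its associativity and equivariance follow directly from those of the actions on $A$ and on $B$ together with the fact that composition and $\Sigma_k$-action in $\E\otimes\E$ are defined componentwise.

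Next I would define the desired $\E$-algebra structure on $A\otimes_\base B$ as the composite
\[
\E(k)\otimes_\base (A\otimes B)^{\otimes k}\xrightarrow{\Delta\otimes\mathrm{id}}(\E\otimes\E)(k)\otimes_\base (A\otimes B)^{\otimes k}\longrightarrow A\otimes B,
\]
where the second arrow is the action just described. Since $\Delta$ is a morphism of operads (which the excerpt asserts, having checked compatibility with differentials and with composition multiplications), pulling back the $(\E\otimes\E)$-algebra structure along $\Delta$ automatically yields a genuine $\E$-algebra structure: associativity and equivariance are inherited because $\Delta$ respects composition and the symmetric-group actions, and compatibility with differentials makes the action a chain map.

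For property (1), functoriality is immediate because every arrow in the composite above is natural in $A$ and $B$: the diagonal $\Delta$ does not depend on the algebras, and the componentwise action of $\E\otimes\E$ is functorial. For property (2), I would evaluate $\Delta$ on the operad generators, using the explicit formula $\Delta(f)=\sum_{(S_1,S_2)}(-1)^{\delta(f,S_1,S_2)}f_{S_1}\otimes f_{S_2}$. The key point is to check that under the canonical identification $A\otimes\base\cong A$, the pulled-back action recovers the original $\E$-action on $A$; this reduces to verifying that $\Delta((12)\cdots)$ restricted to the degree-zero part behaves like the identity on the unit factor, i.e.\ that the overlapping partitions $(S_1,S_2)$ with $f(S_1)=f(S_2)=\bar k$ contribute trivially when one tensor factor is the ground ring. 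The main obstacle I anticipate is the sign bookkeeping: one must confirm that the sign $\delta(f,S_1,S_2)$ together with the shuffle signs introduced in regrouping the $A$- and $B$-factors combine so that the unit identifications are strictly preserved and so that the counit-type identity forces $\Delta$ to restrict correctly on $\A\subset\E$. This is exactly the kind of tiresome but straightforward sign verification the paper has repeatedly deferred to the reader, so I would state the structure maps explicitly and then check the two axioms by a direct comparison of signs.
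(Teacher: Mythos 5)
Your proposal is correct and is exactly the paper's argument: the paper's entire proof consists of defining the action as the composite $\E(k)\xrightarrow{\Delta}(\E\otimes\E)(k)\to Hom(A^{\otimes k},A)\otimes Hom(B^{\otimes k},B)\to Hom((A\otimes B)^{\otimes k},A\otimes B)$, i.e.\ pulling back the componentwise $(\E\otimes\E)$-action along $\Delta$, with properties (1) and (2) left implicit. Your elaboration of the Koszul-sign shuffle and the counit-type check for property (2) (only the partitions where $S_2$ consists of the last occurrence of each value survive when one factor is $\base$, giving back $f$ itself) supplies detail the paper omits but introduces nothing different in approach.
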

\begin{proof}
The action of $\E$ is given by
\[
\E(k)\xrightarrow{\Delta} (\E\otimes \E )(k)\longrightarrow Hom(A^{\otimes k },A)\otimes Hom(B^{\otimes k },B)\longrightarrow Hom((A\otimes B)^{\otimes k}, A\otimes B).
\]
\end{proof}

\textbf{Acknowledgements}\quad The author is grateful to Masana Harada for many valuable comments to improve presentations of the paper.

\clearpage
\textbf{\large Table of signs}\\
 
1.\\

\[
X_2(f;e^1,\dots, e^k)=\sum_\alpha (-1)^{\theta (f,\alpha)}(C(f_{S_1};E_1)\cdot C(f_{S_2};E_2))\diamond \sigma_\alpha
\]
\[
\theta(f,\alpha)=1+|f_{S_1}|+e(E_1)(|f_{S_2}|+e(E_2)-1)+\sum_{i'<i}e^i(E_1)\cdot e^{i'}(E_2)+\sum_{i'<i}||f_{S_1}^{-1}(i)||\cdot ||f_{S_2}^{-1}(i')||
\]
2.\\

\[
X_3(f;e^1,\dots ,e^k)=\sum_{i=1}^{k}\sum_{t=1}^{e^j-1}(-1)^{\lambda (f,i,t,e^1,\dots ,e^k)}C(f;e^1,\dots, e^j-1,\dots, e^k)((1),\dots, (12),\dots ,(1)).
\]
\[
\lambda (f,i,t,e^1,\dots ,e^k)=1+t+|f|+\sum_{i'\geq i}e^{i'}.
\]
3.\\

\[
f(\mbf{x}^1,\dots, \mbf{x}^k)=\sum_{l=1}^{p}\sum_{\quad \alpha \in A_l(f;p^1,\dots p^k)}(-1)^{\kappa (f,\alpha,\mbf{x})}\big[ C(f_{S_1};E_1)(E_1\mbf{x})\big|\cdots 
\big|C(f_{S_l};E_l)(E_l\mbf{x})\big], 
\]
\[
\begin{split}
\kappa (f,\alpha,\mbf{x})=\sum_{((i,j),(i',j'))\in T(f,\alpha)}&\Big\{||f_{S_j}^{-1}(i)||\cdot ||f_{S_{j'}}^{-1}(i')||+||E_j(\mbf{x}^i)||\cdot ||E_{j'}(\mbf{x}^{i'})||\Big\}\\ 
+
\sum_{j'<j}|f_{S_j}|&\cdot ||E_{j'}(\mbf{x})||+\sum_{j=1}^l\Big\{\sum_{i'\geq i}e^{i'}(E_j)\cdot ||E_j(\mbf{x})^i||+\sum_{i=1}^k\sum_{t=1}^{e^i(E_j)}t\cdot ||E_j(\mbf{x})^i_t||\Big\},
\end{split}
\]
4.\\

\[
C(f\diamond \sigma ;e^1,\dots, e^k)=(-1)^{\xi(f,\sigma, e^1,\dots, e^k)}C(f;e^{\sigma^{-1}(1)},\dots ,e^{\sigma^{-1}(k)})\diamond \sigma(e^1,\dots, e^k),
\]
\[
\xi(f,\sigma,e^1,\dots,e^k)=\sum_{i<i'\text{s.t.}\sigma(i)>\sigma(i')}e^i\cdot e^{i'}
\]
5.\\
\[
C(f\cdot g, p^1,\dots, p^k,q^1,\dots q^r)=
\sum_{l\geq 1}\sum_{\alpha\in A_l(g;q^1,\dots ,q^r)}(-1)^{\varpi (g,p,\alpha) }C_l(C(f;p^1,\dots p^k),C(g_{S_1};E_1),\dots ,C(g_{S_l};E_l))\diamond  \hat\sigma_\alpha
\]
\[
\begin{split}
\varpi (g,p,\alpha)=\sum_{i>i', j<j'}||g_{S_j}^{-1}(i)||\cdot &||g_{S_{j'}}^{-1}(i')||+\sum_{i\geq i', j>j'}e^i(E_j)\cdot e^{i'}(E_{j'})\\
&+\sum_j\big\{(l-j)e(E_j)+|g_{S_j}|\cdot (j+\sum_{j'<j}e(E_{j'})) \big\}+(l+p)|g|+p(q-l)
\end{split}
\]

\begin{thebibliography}{99}
\bibitem{may} J.P.May, {\em A general algebraic approach to Steenrod operations}, Lecture Notes in Mathematics, Vol. 168 Springer, Berlin, 153-231. 
\bibitem{baues}H.J.Baues,
{\em The double bar and cobar constructions}, 
Compositio Math. 43 (1981), no. 3, 331-341. 
\bibitem{krizmay} I.Kriz, J.P.May,
{\em Operads, algebras, modules and motives}, 
Asterisque No. 233 (1995), iv+145pp. 
\bibitem{mandell} M.A.Mandell, {\em $E_\infty$ algebras and $p$-adic homotopy theory}, Topology 40 (2001), no. 1, 43-94.
\bibitem{mccluresmith} J.E.McClure, J.H.Smith, {\em Multivariable cochain operation and little $n$-cubes}, J. Amer. Math. Soc. 16 (2003), no. 3, 681-704.
\bibitem{fresse2} B.Fresse   {\em La construction bar d'une alg$\grave e$bre comme alg$\grave e$bre de Hopf $E$-infini}, C. R. Math. Acad. Sci. Paris 337 (2003), no. 6, 403-408.
\bibitem{kadeishvili} T.Kadeishvili, {\em Cochain operations defining Steenrod $\smile_i$-products in the bar construction}, Georgian Math. J. 10 (2003), no. 1, 115-125. 
\bibitem{bergerfresse} C.Berger, B.Fresse, {\em Combinatorial operad actions on cochains}, Math. Proc. Cambridge Philos. Soc. 137 (2004), no. 1, 135-174. 
\bibitem{smith} J.R.Smith, {\em Iterating the cobar construction}, Mem. Amer. Math. Soc. 109 (1994), no. 524, viii+141 pp.

\bibitem{vogt} M.Brun, Z.Fiedorowicz, R.M.Vogt,{\em On the multiplicative structure of topological Hochschild homology}, Algebr. Geom. Topol. 7 (2007), 1633-1650. 
\bibitem{fresse1} B.Fresse  {\em The bar complex of an $E$-infinity algebra}, Adv. Math. 223 (2010), no. 6, 2049-2096. 


\end{thebibliography}
\end{document}